\def%
\def%
\def\MRL{\mbox{MRL}}
\def\LMRL{\mbox{LMRL}}
\def\prim{a}
\def\fneg{f}
\def\limp{\supset}
\def\subst#1#2#3{#3[#2/#1]}
\def%
\def%
\def\interp#1#2{[#2]_{#1}}
\def\setcomp#1{{\overline#1}}
\def\jimp{\Rightarrow}
\def\fullset{\setcomp{\emptyset}}
\def\ncut#1{\mbox{\it{#1}-cut}}
\def\ncutconj#1{\mbox{\it{#1}-cut-conj}}
\def\ncutdisj#1{\mbox{\it{#1}-cut-disj}}
\def\tpjg{\vdash}
\def\height#1{\mbox{\it ht}(#1)}
\title
[Journal of Functional Programming]
{Multirole Logic\break{\small(Extend Abstract)}}
\author
[Hongwei Xi and Hanwen Wu]
{
{Hongwei Xi and Hanwen Wu} \\
Boston University, Boston, MA 02215, USA\\
\email{hwxi@cs.bu.edu, hwwu@cs.bu.edu}
}
\begin{document}

\label{firstpage}

\maketitle

\begin%
{abstract}
We identify multirole logic as a new form of logic in which
conjunction/disjunction is interpreted as an ultrafilter on the
power set of some underlying set (of roles) and the notion of negation
is generalized to endomorphisms on this underlying set.  We formalize
both multirole logic ($\MRL$) and linear multirole logic ($\LMRL$) as
natural generalizations of classical logic (CL) and classical linear
logic (CLL), respectively, and also present a filter-based
interpretation for intuitionism in multirole logic. Among various
meta-properties established for $\MRL$ and $\LMRL$, we obtain one
named multiparty cut-elimination stating that every cut involving one
or more sequents (as a generalization of a (binary) cut involving
exactly two sequents) can be eliminated, thus extending the celebrated
result of cut-elimination by Gentzen.
\end{abstract}

\tableofcontents

\section
{Introduction}
While the first and foremost inspiration for multirole logic came to
us during a study on multiparty session types in distributed
programming~\cite{MTLC-i-sessiontype,MTLC-g-sessiontype}, it seems
natural in retrospective to introduce multirole logic by exploring (in
terms of a notion referred to as role-based interpretation) the
well-known duality between conjunction and disjunction in classical
logic. For instance, in a two-sided presentation of the classical
sequent calculus (LK), we have the following rules for conjunction and
disjunction:
\def\Ggamma{\underline{A}}
\def\Ddelta{\underline{B}}
$$
\begin%
{array}{c}
\infer%
[\hbox{\bf(conj-r)\hss}]
{\Ggamma\tpjg\Ddelta, A\land B}
{\Ggamma\tpjg\Ddelta, A & \Ggamma\tpjg\Ddelta, B}
\\[6pt]
\infer%
[\hbox{\bf(conj-l-1)\hss}]
{\Ggamma, A\land B\tpjg\Ddelta}{\Ggamma, A\tpjg\Ddelta}
\kern18pt
\infer%
[\hbox{\bf(conj-l-2)\hss}]
{\Ggamma, A\land B\tpjg\Ddelta}{\Ggamma, B\tpjg\Ddelta}
\\[6pt]
\infer%
[\hbox{\bf(disj-l)\hss}]
{\Ggamma, A\lor B\tpjg\Ddelta}
{\Ggamma, A\tpjg\Ddelta & \Ggamma, B\tpjg\Ddelta}
\\[6pt]
\infer%
[\hbox{\bf(disj-r-1)\hss}]
{\Ggamma\tpjg\Ddelta, A\lor B}{\Ggamma\tpjg\Ddelta, A}
\kern18pt
\infer%
[\hbox{\bf(disj-r-2)\hss}]
{\Ggamma\tpjg\Ddelta, A\lor B}{\Ggamma\tpjg\Ddelta, B}
\\
\end{array}
$$
where $\Ggamma$ and $\Ddelta$ range over sequents (that are
essentially sequences of formulas).  One possibility to explain this
duality is to think of the availability of two roles $0$ and $1$ such
that the left side of a sequent judgment (of the form
$\Ggamma\tpjg\Ddelta$) plays role $1$ while the right side does role
$0$. In addition, there are two logical connectives $\land_0$ and
$\land_1$; $\land_r$ is given a conjunction-like interpretation by the
side playing role $r$ and disjunction-like interpretation by the other
side playing role $1-r$, where $r$ ranges over $0$ and $1$. With this
explanation, it seems entirely natural for us to introduce more roles
into classical logic.

\def\RC{\mbox{\underline{\rm R}}}
Multirole logic is parameterized over a chosen underlying set of
roles, which may be infinite, and we use $\fullset$ to refer to this
set. Given a subset $R$ of $\fullset$, we use $\setcomp{R}$ for the
complement of $R$ in $\fullset$. Also, we use $R_1\uplus R_2$ for the
disjoint union of $R_1$ and $R_2$ (where $R_1$ and $R_2$ are assumed
to be disjoint).

For the moment, let us assume that $\fullset$ consists all of the
natural numbers less than $N$ for some given $N\geq 2$.  Intuitively,
a conjunctive multirole logic is one in which there is a logical
connective $\land_r$ for each $r\in\fullset$ such that $\land_r$ is
given a conjunction-like interpretation by a side playing role $r$ and
a disjunction-like interpretation otherwise.  If we think of the
universal quantifier $\forall$ as an infinite form of conjunction,
then what is said about $\land$ can be readily applied to $\forall$ as
well. In fact, additive, multiplicative, and exponential connectives
in linear logic~\cite{LinearLogic} can all be treated in a similar
manner.  Dually, a disjunctive multirole logic can be formulated (by
giving $\land_r$ a disjunction-like interpretation if the side plays
the role $r$ and a conjunction-like interpretation otherwise). For
brevity, we primarily focus on conjunctive multirole logic in this
paper.

Given a formula $A$ and a set $R$ of roles, we write $\interp{R}{A}$
for an i-formula, which is some sort of interpretation of $A$ based on
$R$. For instance, the interpretation of $\land_r$ based on $R$ is
conjunction-like if $r\in R$ holds, and it is disjunction-like
otherwise.  A crucial point, which we learned when studying multiparty
session types~\cite{MTLC-g-sessiontype}, is that interpretations
should be based on sets of roles rather than just individual roles. In
other words, one side is allowed to play multiple roles
simultaneously. A sequent $\Gamma$ in multirole logic is a multiset of
i-formulas, and such a sequent is inherently many-sided as each $R$
appearing in $\Gamma$ represents precisely one side. As can be readily
expected, the cut-rule in (either conjunctive or disjunctive)
multirole logic is of the following form:
$$
\begin%
{array}{l}
\infer%
{\Gamma}
{\Gamma, \interp{R}{A} & \Gamma, \interp{\setcomp{R}}{A}}
\end{array}
$$
The cut-rule can be interpreted as some sort of communication between
two parties in distributed
programming~\cite{Abramsky94,BellinS94,CairesP10,Wadler:2012ua}. For
communication between multiple parties, it is natural to seek a
generalization of the cut-rule that involves more than two sequents.
In conjunctive multirole logic, the admissibility of the following
rule of the name $\ncutconj{n}$ can be established:
$$
\begin%
{array}{l}
\infer%
{\tpjg\Gamma}
{\setcomp{R_1}\uplus\cdots\uplus\setcomp{R_n}=\fullset &
 \tpjg\Gamma,\interp{R_1}{A} & \cdots & \tpjg\Gamma,\interp{R_n}{A}
}
\end{array}
$$
In disjunctive multirole logic, the admissibility of the following
rule of the name $\ncutdisj{n}$ can be established:
$$
\begin%
{array}{l}
\infer%
{\tpjg\Gamma}
{R_1\uplus\cdots\uplus R_n=\fullset &
 \tpjg\Gamma,\interp{R_1}{A} & \cdots & \tpjg\Gamma,\interp{R_n}{A}
}
\end{array}
$$
We may use the name $\ncut{n}$ to refer to either $\ncutconj{n}$
or $\ncutdisj{n}$.

\def\lneg{\lnot}
In classical logic, the negation operator is clearly one of a
kind. With respect to negation, the conjunction and disjunction
operators behave dually, and the universal and existential quantifiers
behave dually as well.  For the moment, let us write $\lneg{A}$ for
the negation of $A$.  It seems rather natural to interpret
$\interp{R}{\lneg{A}}$ as $\interp{\setcomp{R}}{A}$. Unfortunately,
such an interpretation of negation immediately breaks $n$-cut for any
$n\geq 3$. What we discover regarding negation is that the notion of
negation can be generalized to endmorphisms on the underlying set
$\fullset$ of roles.



\def%
\mrln#1#2%
{{\lneg_#1}(#2)}
\def%
\mrlc#1#2#3%
{#2\,{\land_#1}\,#3}
\def%
\mrlq#1#2%
{{\forall_#1}(#2)}
\def%
\mrlimp#1#2#3#4%
{#3\,{\limp_{#1,#2}{#4}}}

\def\F{{\cal F}}
\def\UF{{\cal U}}

\def\preimg#1#2{{#1}^{-1}(#2)}

\begin%
{figure}
\[
\begin%
{array}{c}
\infer%
[\hbox to 0pt{\bf(Id)\hss}]
{\tpjg\Gamma,\interp{R_1}{\prim},\ldots,\interp{R_n}{\prim}}
{\fullset=R_1\uplus\ldots\uplus R_n}
\\[6pt]
\infer%
[\hbox to 0pt{\bf(Weaken)\hss}]
{\tpjg\Gamma,\interp{R}{A}}
{\tpjg\Gamma,\interp{R}{A},\interp{R}{A}}
\\[6pt]
\infer%
[\hbox to 0pt{\bf($\lneg$)\hss}]
{\tpjg\Gamma,\interp{R}{\mrln{\fneg}{A}}}
{\tpjg\Gamma,\interp{\preimg{\fneg}{R}}{A}}
\\[6pt]
\infer%
[\hbox to 0pt{\bf($\land$-neg-l)\hss}]
{\tpjg\Gamma,\interp{R}{\mrlc{\UF}{A}{B}}}
{R\not\in\UF & \tpjg\Gamma,\interp{R}{A}}
\\[6pt]
\infer%
[\hbox to 0pt{\bf($\land$-neg-r)\hss}]
{\tpjg\Gamma,\interp{R}{\mrlc{\UF}{A}{B}}}
{R\not\in\UF & \tpjg\Gamma,\interp{R}{B}}
\\[6pt]
\infer%
[\hbox to 0pt{\bf($\land$-pos)\hss}]
{\tpjg\Gamma,\interp{R}{\mrlc{\UF}{A}{B}}}
{
R\in\UF &
\tpjg\Gamma,\interp{R}{A} & \tpjg\Gamma,\interp{R}{B}
}
\\[6pt]
\infer%
[\hbox to 0pt{\bf($\limp$-neg)\hss}]
{\tpjg\Gamma,\interp{R}{\mrlimp{\fneg}{\UF}{A}{B}}}
{R\not\in\UF & \tpjg\Gamma,\interp{\preimg{\fneg}{R}}{A},\interp{R}{B}}
\\[6pt]
\infer%
[\hbox to 0pt{\bf($\limp$-pos)\hss}]
{\tpjg\Gamma_1,\Gamma_2,\interp{R}{\mrlimp{\fneg}{\UF}{A}{B}}}
{R\in\UF &
  \tpjg\Gamma_1,\interp{\preimg{\fneg}{R}}{A} & \tpjg\Gamma_2,\interp{R}{B}}
\\[6pt]
\infer%
[\hbox to 0pt{\bf($\forall$-neg)\hss}]
{\tpjg\Gamma,\interp{R}{\mrlq{\UF}{\lambda x.A}}}
{R\not\in\UF & \tpjg\Gamma,\interp{R}{\subst{x}{t}{A}}}
\\[6pt]
\infer%
[\hbox to 0pt{\bf($\forall$-pos)\hss}]
{\tpjg\Gamma,\interp{R}{\mrlq{\UF}{\lambda x.A}}}
{R\in\UF & x\not\in\Gamma & \tpjg\Gamma,\interp{R}{A}}
\\[6pt]
\end{array}
\]
\label{figure:MRL:infrules}
\caption{The inference rules for MRL}
\end{figure}
\section%
{Multirole Logic}
\label{section:MRLogic}
Let $\fullset$ be the underlying set of roles for the multirole logic
MRL presented in this section. Strictly speaking, this MRL should be
referred to as {\em first-order predicate multirole logic\/}.

We use $t$ for first-order terms, which are standard (and thus not
formulated explicitly for brevity).

\begin%
{definition}
A filter $\F$ on $\fullset$ is a subset of the power set of $\fullset$
such that
\begin%
{itemize}
\item $\fullset\in\F$
\item $R_1\in\F$ and $R_1\subseteq R_2$ implies $R_2\in\F$
\item $R_1\in\F$ and $R_2\in\F$ implies $R_1\cap R_2\in\F$
\end{itemize}
A filter on $\fullset$ is an ultrafilter if either $R\in\F$ or
$\setcomp{R}\in\F$ holds for every subset $R$ of $\fullset$.  We use
$\UF$ to range over ultrafilters on $\fullset$. Note that each $\UF$
on $\fullset$ is of the form $\{R\subseteq\fullset\mid r\in R\}$ for
some $r\in\fullset$ if $\fullset$ is finite.
\end{definition}
Given an endmorphism $f$ on $\fullset$, we use $\lneg_{\fneg}$ for a
unary negative connective.  Given an ultrafilter $\UF$ on $\fullset$,
we $\land_{\UF}$ for a binary conjunctive connective and
$\forall_{\UF}$ for a universal quantifier. Given an endomorphism $f$
and an ultrafilter on $\fullset$, we use $\limp_{\fneg,\UF}$ for a
binary implicative connective. The formulas in $\MRL$ are defined as
follows:
\[
\begin%
{array}
{lrcl}
\mbox{formulas} & A & ::= & %
\prim \mid \mrln{\fneg}{A} \mid \mrlc{\UF}{A_1}{A_2} \mid \mrlimp{\fneg}{\UF}{A}{B} \mid \mrlq{\UF}{\lambda x.A} \\
\end{array}
\]
where $p$ ranges over pre-defined primitive formulas. Instead of
writing something like $\forall_{\UF} x.A$, we write
$\forall_{\UF}(\lambda x.A)$, where $x$ is a bound variable. Given a
formula $A$, a term $t$ and a variable $x$, we use $\subst{x}{t}{A}$
for the result of substituting $t$ for $x$ in $A$ and treat it as a
proper subformula of $\forall_{\UF}(\lambda x.A)$.

Given a formula $A$ and a set $R$ of roles, $\interp{R}{A}$ is
referred to as an i-formula (for interpretation of $A$ based on
$R$). Let us use $\Gamma$ for multisets of i-formulas, which are also
referred to as sequents.  The inference rules for $\MRL$ are listed
in Figure~\ref{figure:MRL:infrules}. In the rule
$\mbox{\bf($\forall$-pos)}$, $x\not\in\Gamma$ means that $x$ does not
have any free occurrences in i-formulas contained inside $\Gamma$.
Please note that a sequent $\Gamma$ in this formulation is many-sided
(rather than one-sided) as every $R$ appearing $\Gamma$ represents
precisely one side.


Let us use $\D$ for derivations of sequents, which are just trees
containing nodes that are applications of inference rules.  Given a
derivation $\D$, $\height{\D}$ stands for the tree height of $\D$.
When writing $\D::\Gamma$, we mean that $\D$ is a derivation of
$\Gamma$, that is, $\Gamma$ is the conclusion of $\D$. We may also use
the following format to present an inference rule:
$$(\Gamma_1;\ldots;\Gamma_n)\jimp\Gamma_0$$ where $\Gamma_i$ for
$1\leq i\leq n$ are the premisses of the rule and $\Gamma_0$ the
conclusion.

\begin%
{lemma}
(Weakening)
The following rule is admissible:
$$
(\Gamma)\jimp{\Gamma,\interp{R}{A}}
$$
\end{lemma}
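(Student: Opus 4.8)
The plan is a structural induction on a derivation $\D::\Gamma$, showing that for every set $R$ of roles and every formula $A$ one can build a derivation of $\Gamma,\interp{R}{A}$; the construction is in fact height-preserving, although only plain admissibility is claimed here. Intuitively, the extra i-formula $\interp{R}{A}$ can be placed in the side-context of every sequent occurring in $\D$ and carried, untouched, through the whole derivation.

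For the base case, $\D$ is a single instance of \textbf{(Id)}, so $\Gamma$ has the form $\Gamma_0,\interp{R_1}{\prim},\ldots,\interp{R_n}{\prim}$ with $\fullset=R_1\uplus\cdots\uplus R_n$; then $\Gamma,\interp{R}{A}$ equals $(\Gamma_0,\interp{R}{A}),\interp{R_1}{\prim},\ldots,\interp{R_n}{\prim}$, which is again an instance of \textbf{(Id)} with the same partition side condition, hence derivable. For the inductive step we case on the last rule $r$ of $\D$. If $r$ is \textbf{(Weaken)}, \textbf{($\lneg$)}, \textbf{($\land$-neg-l)}, \textbf{($\land$-neg-r)}, \textbf{($\land$-pos)}, \textbf{($\limp$-neg)}, or \textbf{($\forall$-neg)}, then every premiss of $r$ shares its side-context $\Gamma$ with the conclusion; we apply the induction hypothesis (with the same $R$ and $A$) to each premiss to insert $\interp{R}{A}$ into it, and then re-apply $r$. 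The side conditions and role annotations of these rules — whether $R'\in\UF$ or $R'\notin\UF$, the preimage $\preimg{\fneg}{R'}$, the choice of witness $t$, and the duplicated i-formula in \textbf{(Weaken)} — mention only the principal i-formula and its roles, so they are unaffected by the added $\interp{R}{A}$.

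The rule \textbf{($\limp$-pos)} is the one with a split context: its conclusion is $\Gamma_1,\Gamma_2,\interp{R'}{\mrlimp{\fneg}{\UF}{C_1}{C_2}}$ with premisses $\Gamma_1,\interp{\preimg{\fneg}{R'}}{C_1}$ and $\Gamma_2,\interp{R'}{C_2}$. Here we apply the induction hypothesis to the first premiss to obtain a derivation of $\Gamma_1,\interp{R}{A},\interp{\preimg{\fneg}{R'}}{C_1}$ and then re-apply \textbf{($\limp$-pos)} with $\Gamma_1$ replaced by $\Gamma_1,\interp{R}{A}$ (feeding $\interp{R}{A}$ into the second premiss would work equally well).

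I expect the only genuinely delicate case to be \textbf{($\forall$-pos)}. Its premiss $\Gamma,\interp{R'}{A'}$ carries the eigenvariable condition $x\notin\Gamma$, and if the weakening formula $A$ happens to contain $x$ free, then $\interp{R}{A}$ cannot be inserted into that premiss without breaking the condition. As in the classical sequent calculus, this is handled by $\alpha$-renaming: since $\mrlq{\UF}{\lambda x.A'}$ is taken up to renaming of the bound variable $x$, we first rename $x$ to a variable fresh for $A$ (and for $\Gamma$), after which the induction hypothesis applies to the premiss and \textbf{($\forall$-pos)} can be re-applied. Apart from this renaming step, every case is a mechanical propagation of $\interp{R}{A}$ through the inference rules.
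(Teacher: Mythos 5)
Your proof is correct and follows the same strategy as the paper, which simply states that the lemma holds by structural induction on the derivation of $\Gamma$; you have filled in the expected case analysis, including the two points that actually need care (the split context in \textbf{($\limp$-pos)} and the eigenvariable condition in \textbf{($\forall$-pos)}), both of which you handle appropriately.
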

\begin%
{proof}
By structural induction on the derivation of $\Gamma$.
\hfill\end{proof}

\begin%
{lemma}
\label{lemma:MRL:fullset}
The following rule is admissible:
$$
()\jimp{\Gamma,\interp{\fullset}{A}}
$$
\end{lemma}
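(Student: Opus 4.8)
The plan is to prove this by induction on the structure of the formula $A$, with the induction measure taken to be the number of logical connectives and quantifiers occurring in $A$; this measure is invariant under first-order term substitution and under renaming of bound variables, and it strictly decreases when we pass to a proper subformula. The governing observation is twofold: first, $\fullset$ belongs to every filter, so $\fullset\in\UF$ for every ultrafilter $\UF$, which means the role set $\fullset$ always enables the ``positive'' introduction rules $\mbox{\bf($\land$-pos)}$, $\mbox{\bf($\limp$-pos)}$, and $\mbox{\bf($\forall$-pos)}$; second, for every endomorphism $\fneg$ on $\fullset$ we have $\preimg{\fneg}{\fullset}=\fullset$, since $\fneg$ maps $\fullset$ into itself. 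Together these guarantee that each top-level connective of $A$ can be introduced while the role set stays at $\fullset$ and the residual proof obligations are again of the form $\tpjg\Gamma',\interp{\fullset}{B}$ for proper subformulas $B$ of $A$, at which point the induction hypothesis applies.

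Concretely, the base case $A=\prim$ is discharged by applying $\mbox{\bf(Id)}$ with $n=1$ and $R_1=\fullset$, which gives $\tpjg\Gamma,\interp{\fullset}{\prim}$ directly. For the step cases: if $A=\mrln{\fneg}{A'}$, apply $\mbox{\bf($\lneg$)}$, whose premise is $\tpjg\Gamma,\interp{\preimg{\fneg}{\fullset}}{A'}=\tpjg\Gamma,\interp{\fullset}{A'}$, discharged by the induction hypothesis; if $A=\mrlc{\UF}{A_1}{A_2}$, apply $\mbox{\bf($\land$-pos)}$ (legal since $\fullset\in\UF$), whose premises $\tpjg\Gamma,\interp{\fullset}{A_1}$ and $\tpjg\Gamma,\interp{\fullset}{A_2}$ are both instances of the induction hypothesis; if $A=\mrlimp{\fneg}{\UF}{A_1}{A_2}$, apply $\mbox{\bf($\limp$-pos)}$ (again $\fullset\in\UF$) with $\Gamma_1:=\Gamma$ and $\Gamma_2$ empty, so the premises become $\tpjg\Gamma,\interp{\preimg{\fneg}{\fullset}}{A_1}=\tpjg\Gamma,\interp{\fullset}{A_1}$ and $\tpjg\interp{\fullset}{A_2}$, both instances of the induction hypothesis; and if $A=\mrlq{\UF}{\lambda x.A'}$, first rename the bound variable $x$ to a variable not free in $\Gamma$ (legitimate since formulas are taken up to $\alpha$-equivalence), then apply $\mbox{\bf($\forall$-pos)}$ with $\fullset\in\UF$ and the now-satisfied side condition $x\not\in\Gamma$, discharging the single premise $\tpjg\Gamma,\interp{\fullset}{A'}$ by the induction hypothesis.

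The only point that requires genuine care is the universal-quantifier case: $\mbox{\bf($\forall$-pos)}$ carries the eigenvariable side condition $x\not\in\Gamma$, so one must first $\alpha$-convert away from the free variables of $\Gamma$ and rely on the induction measure being stable under such renaming — this is precisely why the induction is phrased on connective-and-quantifier count rather than on a naive notion of subterm size. Every other case is a direct match of the desired conclusion against a positive introduction rule, so I anticipate no real obstacle beyond this bookkeeping. We may also remark that the derivation so constructed uses only the rules of Figure~\ref{figure:MRL:infrules}, so the lemma holds without appeal to any form of cut.
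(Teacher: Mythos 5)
Your proof is correct and follows essentially the same route as the paper, which simply states ``by structural induction on $A$'': you fill in the two key observations ($\fullset\in\UF$ for every ultrafilter and $\preimg{\fneg}{\fullset}=\fullset$ for every endomorphism) that make every positive rule applicable with role set $\fullset$, and you handle the eigenvariable condition for $\mbox{\bf($\forall$-pos)}$ correctly via $\alpha$-renaming.
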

\begin%
{proof}
By structural induction on $A$.
\hfill\end{proof}

\begin%
{lemma}
(1-cut)
\label{lemma:MRL:emptyset}
The following rule is admissible:
$$
(\Gamma,\interp{\emptyset}{A})\jimp
{\Gamma}
$$
\end{lemma}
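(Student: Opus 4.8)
The plan is to prove admissibility of $(\Gamma,\interp{\emptyset}{A})\jimp\Gamma$ by structural induction on a derivation $\D::\Gamma,\interp{\emptyset}{A}$, analyzing the last rule applied. The key observation driving the argument is that $\emptyset\not\in\UF$ for every ultrafilter $\UF$ (since $\fullset\in\UF$ and $\emptyset=\setcomp{\fullset}$, and an ultrafilter cannot contain both a set and its complement unless... actually $\emptyset\notin\UF$ because $\emptyset\subseteq R$ would force every $R\in\UF$, and in particular force the filter to be improper; more directly, if $\emptyset\in\UF$ then $\emptyset=\emptyset\cap\setcomp{\emptyset}$ forces a contradiction with properness). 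Hence any i-formula of the form $\interp{\emptyset}{\mrlc{\UF}{A_1}{A_2}}$, $\interp{\emptyset}{\mrlq{\UF}{\lambda x.B}}$, or $\interp{\emptyset}{\mrlimp{\fneg}{\UF}{A_1}{A_2}}$ can only be introduced by the corresponding \emph{negative} rule, never the positive one.

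First I would handle the case where the last rule does not act on the distinguished i-formula $\interp{\emptyset}{A}$: then $\interp{\emptyset}{A}$ sits passively in $\Gamma$ (and in each premise, up to the bound-variable side condition of $\mbox{\bf($\forall$-pos)}$, which is unaffected since $A$'s free variables are already in the conclusion), so I apply the induction hypothesis to each premise and reapply the same rule. The case $\mbox{\bf(Id)}$ is immediate: if the distinguished i-formula is one of the $\interp{R_i}{\prim}$ with $R_i=\emptyset$, then dropping it still leaves a partition $\fullset=R_1\uplus\cdots$ of the remaining sets (omitting an empty block changes nothing), so $\mbox{\bf(Id)}$ still applies; otherwise it is passive and handled as above. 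The case $\mbox{\bf(Weaken)}$ acting on $\interp{\emptyset}{A}$ is trivial — the premise is already $\Gamma,\interp{\emptyset}{A},\interp{\emptyset}{A}$, apply IH twice, or rather apply IH once to get $\Gamma,\interp{\emptyset}{A}$ and again.

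The interesting cases are when the last rule introduces the principal connective of $\interp{\emptyset}{A}$. For $\interp{\emptyset}{\mrln{\fneg}{B}}$ via $\mbox{\bf($\lneg$)}$: the premise is $\tpjg\Gamma,\interp{\preimg{\fneg}{\emptyset}}{B}$, and since $f$ is total, $\preimg{\fneg}{\emptyset}=\emptyset$, so the premise is $\tpjg\Gamma,\interp{\emptyset}{B}$ and the IH finishes it. For $\interp{\emptyset}{\mrlc{\UF}{A_1}{A_2}}$: as noted, $\emptyset\not\in\UF$, so the rule is $\mbox{\bf($\land$-neg-l)}$ or $\mbox{\bf($\land$-neg-r)}$, whose premise is $\tpjg\Gamma,\interp{\emptyset}{A_i}$ for $i\in\{1,2\}$ — again directly closed by the IH. The $\forall$ case is analogous using $\mbox{\bf($\forall$-neg)}$ with premise $\tpjg\Gamma,\interp{\emptyset}{\subst{x}{t}{B}}$. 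For $\interp{\emptyset}{\mrlimp{\fneg}{\UF}{A_1}{A_2}}$ via $\mbox{\bf($\limp$-neg)}$, the premise is $\tpjg\Gamma,\interp{\preimg{\fneg}{\emptyset}}{A_1},\interp{\emptyset}{A_2}=\tpjg\Gamma,\interp{\emptyset}{A_1},\interp{\emptyset}{A_2}$, so I apply the IH twice. In every branch the derivation strictly shrinks, so the induction is well-founded.

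The main obstacle I anticipate is purely bookkeeping rather than conceptual: being careful that the distinguished i-formula may be chosen among several syntactically identical copies of $\interp{\emptyset}{A}$ in the multiset, and that stripping it in the premise(s) of a passive rule is legitimate even when the rule (such as $\mbox{\bf($\limp$-pos)}$) splits the context $\Gamma$ into $\Gamma_1,\Gamma_2$ — one must then track which part carries the distinguished copy. Checking $\emptyset\notin\UF$ explicitly (it is forced by properness, which follows from $R_1\in\UF, R_2\in\UF\Rightarrow R_1\cap R_2\in\UF$ together with the ultrafilter dichotomy) is the one small lemma I would state up front, since the whole argument hinges on it ruling out the positive introduction rules.
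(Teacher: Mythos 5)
Your overall strategy is the same as the paper's: the paper's proof of this lemma is just ``structural induction on $\D$'', and your case analysis (positive rules are blocked because $\emptyset\not\in\UF$, $\preimg{\fneg}{\emptyset}=\emptyset$ handles $\lneg$ and $\limp$, passive rules commute) is the right way to flesh that out. Two points need repair, though. First, your justification of $\emptyset\not\in\UF$ is circular: you derive it ``from properness'', but properness ($\emptyset\not\in\F$) is exactly the clause missing from the paper's stated definition of a filter, and it does \emph{not} follow from closure under intersection plus the ultrafilter dichotomy (the full power set satisfies both). You should simply say that filters are tacitly assumed proper --- the paper clearly intends this, since otherwise its remark that every ultrafilter on a finite $\fullset$ is principal would be false --- and then $\emptyset\not\in\UF$ is immediate.

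Second, and more substantively: in the $\mbox{\bf(Weaken)}$ case and in the $\mbox{\bf($\limp$-neg)}$ case you ``apply the IH twice'', but the second application is to the derivation \emph{produced} by the first application, which is not a subderivation of $\D$; plain structural induction on $\D$ does not license this step, so as written it would fail. The standard fix --- and the one the paper itself spells out for the LMRL analogue (Lemma 3.2 there) --- is to strengthen the statement to: there exists $\D'::\Gamma$ with $\height{\D'}\leq\height{\D}$, and induct on $\height{\D}$; then the output of the first IH application has height at most that of the premise, hence strictly less than $\height{\D}$, and the second application is legitimate. (Alternatively, prove the generalized rule $(\Gamma,\interp{\emptyset}{A_1},\ldots,\interp{\emptyset}{A_k})\jimp\Gamma$ removing all empty-role formulas at once.) With that strengthening, all of your cases go through, including the edge cases you flag around $\mbox{\bf(Id)}$ and the context split in $\mbox{\bf($\limp$-pos)}$.
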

\begin%
{proof}
Assume
$\D::(\Gamma,\interp{\emptyset}{A})$.
The proof proceeds by structural induction on $\D$.
\hfill\end{proof}
Note that Lemma~\ref{lemma:MRL:emptyset} can be seen as a special form
of cut-elimination where only one sequent is involved.


\begin%
{lemma}
The following rule is admissible:
$$
()\jimp{\Gamma,\interp{R}{A},\interp{\setcomp{R}}{A}}
$$
\end{lemma}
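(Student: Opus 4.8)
The plan is to prove, for all $\Gamma$ and $R$, that the sequent $\tpjg\Gamma,\interp{R}{A},\interp{\setcomp{R}}{A}$ is derivable, which is precisely what admissibility of the displayed rule amounts to; I would proceed by structural induction on the formula $A$. Morally this lemma is the extension of the axiom rule \textbf{(Id)} from primitive formulas to arbitrary ones. Three elementary facts about role sets will be used repeatedly: $R$ and $\setcomp{R}$ partition $\fullset$, so $\fullset=R\uplus\setcomp{R}$; preimage under an endomorphism $\fneg$ commutes with complementation, i.e.\ $\preimg{\fneg}{\setcomp{R}}=\overline{\preimg{\fneg}{R}}$; and for an ultrafilter $\UF$, exactly one of $R\in\UF$ and $\setcomp{R}\in\UF$ holds.

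The base case $A=\prim$ is immediate, since $\tpjg\Gamma,\interp{R}{\prim},\interp{\setcomp{R}}{\prim}$ is an instance of \textbf{(Id)} with $n=2$, $R_1=R$ and $R_2=\setcomp{R}$, using $R\uplus\setcomp{R}=\fullset$. For $A=\mrln{\fneg}{A'}$, I would apply rule \textbf{($\lneg$)} to each of the two i-formulas of the goal, reducing it to $\tpjg\Gamma,\interp{\preimg{\fneg}{R}}{A'},\interp{\preimg{\fneg}{\setcomp{R}}}{A'}$; since $\preimg{\fneg}{\setcomp{R}}=\overline{\preimg{\fneg}{R}}$, this is the induction hypothesis for $A'$ applied at the set $\preimg{\fneg}{R}$.

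For $A=\mrlc{\UF}{A_1}{A_2}$, note that the goal multiset is invariant under $R\mapsto\setcomp{R}$, so by symmetry I may assume $R\in\UF$, whence $\setcomp{R}\notin\UF$. Applying \textbf{($\land$-pos)} at $\interp{R}{\mrlc{\UF}{A_1}{A_2}}$ leaves two subgoals $\tpjg\Gamma,\interp{\setcomp{R}}{\mrlc{\UF}{A_1}{A_2}},\interp{R}{A_i}$ for $i\in\{1,2\}$; on each of these I would apply \textbf{($\land$-neg-l)} or \textbf{($\land$-neg-r)} (legal because $\setcomp{R}\notin\UF$), selecting the $i$-th component, to reach $\tpjg\Gamma,\interp{\setcomp{R}}{A_i},\interp{R}{A_i}$, an instance of the induction hypothesis for $A_i$. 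The universal case $A=\mrlq{\UF}{\lambda x.A'}$ goes the same way: assuming $R\in\UF$, I would first rename the bound variable $x$ so that it does not occur free in $\Gamma$, apply \textbf{($\forall$-pos)} at $\interp{R}{\mrlq{\UF}{\lambda x.A'}}$, and then apply \textbf{($\forall$-neg)} at $\interp{\setcomp{R}}{\mrlq{\UF}{\lambda x.A'}}$ with the witnessing term chosen to be $x$ itself, so that $\subst{x}{x}{A'}=A'$; the remaining subgoal $\tpjg\Gamma,\interp{R}{A'},\interp{\setcomp{R}}{A'}$ is the induction hypothesis for the proper subformula $A'$.

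The implication case $A=\mrlimp{\fneg}{\UF}{A_1}{A_2}$ is the one place I expect real friction, because \textbf{($\limp$-pos)} is multiplicative and thus splits its context. Assuming $R\in\UF$ (hence $\setcomp{R}\notin\UF$), I would first apply \textbf{($\limp$-neg)} at $\interp{\setcomp{R}}{\mrlimp{\fneg}{\UF}{A_1}{A_2}}$, which, after rewriting $\preimg{\fneg}{\setcomp{R}}$ as $\overline{\preimg{\fneg}{R}}$, turns the goal into $\tpjg\Gamma,\interp{R}{\mrlimp{\fneg}{\UF}{A_1}{A_2}},\interp{\overline{\preimg{\fneg}{R}}}{A_1},\interp{\setcomp{R}}{A_2}$. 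I would then apply \textbf{($\limp$-pos)} at the remaining implicative i-formula, choosing the context split so that $\Gamma$ together with $\interp{\overline{\preimg{\fneg}{R}}}{A_1}$ becomes the context of the left premise while $\interp{\setcomp{R}}{A_2}$ becomes that of the right premise. The two premises are then $\tpjg\Gamma,\interp{\overline{\preimg{\fneg}{R}}}{A_1},\interp{\preimg{\fneg}{R}}{A_1}$ and $\tpjg\interp{\setcomp{R}}{A_2},\interp{R}{A_2}$, both of which are instances of the induction hypothesis (for $A_1$ at the set $\preimg{\fneg}{R}$, and for $A_2$ at the set $R$ with empty context). Apart from the bookkeeping in this last case the argument is routine; the recurring point of care is simply keeping complements aligned after each negative or implicative step, and that is exactly what the identity $\preimg{\fneg}{\setcomp{R}}=\overline{\preimg{\fneg}{R}}$ guarantees.
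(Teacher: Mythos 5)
Your proof is correct and takes exactly the route the paper names first, namely structural induction on $A$, with the identity $\preimg{\fneg}{\setcomp{R}}=\setcomp{\preimg{\fneg}{R}}$ keeping complements aligned in the negation and implication cases and the ultrafilter dichotomy driving the connective cases; you simply supply the case analysis the paper leaves implicit. The paper additionally observes that the lemma follows at once from Lemma~\ref{lemma:MRL:fullset} and Lemma~\ref{lemma:MRL:role-split} (derive $\interp{\fullset}{A}$ and split $\fullset$ as $R\uplus\setcomp{R}$), but your direct induction is essentially the same as its primary suggestion.
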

\begin%
{proof}
A proof for the lemma can be given based on structural induction on
$A$ directly. Also, the lemma immediately follows from
Lemma~\ref{lemma:MRL:fullset} and Lemma~\ref{lemma:MRL:role-split}
\hfill\end{proof}

\begin%
{lemma}
\label{lemma:MRL:cut-2-spill}
(2-cut with spill)
Assume that
$\setcomp{R_1}$
and
$\setcomp{R_2}$
are disjoint.
Then the following rule is admissible in $\MRL$:
$$
(\Gamma_1,\interp{R_1}{A};
 \Gamma_2,\interp{R_2}{A})\jimp
{\Gamma_1,\Gamma_2,\interp{R_1\cap R_2}{A}}
$$
\end{lemma}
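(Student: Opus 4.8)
The plan is to argue by induction on the pair $(\sizeof{A},\ \height{\D_1}+\height{\D_2})$ ordered lexicographically, where $\D_1::(\Gamma_1,\interp{R_1}{A})$ and $\D_2::(\Gamma_2,\interp{R_2}{A})$ are the given derivations; this is the usual organisation of a cut-elimination argument, with $\sizeof{A}$ playing the role of the cut rank. The whole argument is driven by three elementary facts that I would record first. The hypothesis that $\setcomp{R_1}$ and $\setcomp{R_2}$ are disjoint is the same as $R_1\cup R_2=\fullset$, and it gives $\setcomp{R_1\cap R_2}=\setcomp{R_1}\uplus\setcomp{R_2}$. For every (proper) ultrafilter $\UF$ on $\fullset$ we have $R_1\cap R_2\in\UF$ if and only if both $R_1\in\UF$ and $R_2\in\UF$, by upward closure and closure under intersection; moreover at least one of $R_1,R_2$ belongs to $\UF$, since otherwise $\setcomp{R_1}$ and $\setcomp{R_2}$ would both belong to $\UF$ and hence so would their empty intersection. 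Finally, for any endomorphism $\fneg$ on $\fullset$ the operation $R\mapsto\preimg{\fneg}{R}$ commutes with complementation and with intersection, so $\setcomp{\preimg{\fneg}{R_1}}$ and $\setcomp{\preimg{\fneg}{R_2}}$ are again disjoint and $\preimg{\fneg}{R_1\cap R_2}=\preimg{\fneg}{R_1}\cap\preimg{\fneg}{R_2}$. These are precisely what is needed for the disjointness hypothesis to be inherited by the cuts that the induction creates on immediate subformulas.

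The easy cases come first. If the last rule of $\D_1$ does not have $\interp{R_1}{A}$ as its principal i-formula, I would permute the cut above it: cut each immediate premise of $\D_1$ against $\D_2$ (the total height has strictly decreased and $\sizeof{A}$ is unchanged, so the induction hypothesis applies) and then re-apply the rule to the results; for \textbf{($\forall$-pos)} one first renames the eigenvariable so that it is fresh for the new conclusion, and for \textbf{($\limp$-pos)} the cut i-formula lies in exactly one of the two premises, which is the one to cut. If $\D_1$ is an instance of \textbf{(Id)} that does not use the cut i-formula as one of its distinguished primitive i-formulas, then $\Gamma_1,\Gamma_2,\interp{R_1\cap R_2}{A}$ is itself such an instance. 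The case in which $\D_2$ rather than $\D_1$ is not principal at the cut i-formula is symmetric. The remaining base case is that $A$ is a primitive $\prim$ and both $\D_1$ and $\D_2$ introduce the cut i-formula by \textbf{(Id)}: then $\Gamma_1$ already contains primitive i-formulas whose role sets partition $\setcomp{R_1}$ and $\Gamma_2$ ones partitioning $\setcomp{R_2}$, and since $R_1\cap R_2$, $\setcomp{R_1}$, $\setcomp{R_2}$ are pairwise disjoint with union $\fullset$, these together with $\interp{R_1\cap R_2}{\prim}$ form a valid \textbf{(Id)} instance proving $\Gamma_1,\Gamma_2,\interp{R_1\cap R_2}{\prim}$.

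Next come the principal cases with a compound $A$, where both $\D_1$ and $\D_2$ end with the (uniquely determined) logical rule for the top connective of $A$ applied to the cut i-formula, the rule on side $i$ being fixed by whether $R_i\in\UF$. By the observations above these rules always match up. For $\mrln{\fneg}{B}$, applying \textbf{($\lneg$)} to the goal reduces it to a cut on $B$ with role sets $\preimg{\fneg}{R_1}$ and $\preimg{\fneg}{R_2}$, still with disjoint complements and on a strictly smaller cut formula. For $\mrlc{\UF}{A_1}{A_2}$: if $R_1,R_2\in\UF$ then $R_1\cap R_2\in\UF$ and I would cut the two $A_1$-premises and the two $A_2$-premises and recombine by \textbf{($\land$-pos)}; if only one side is positive then $R_1\cap R_2\notin\UF$, and I would take from that side just the sub-derivation for the subformula selected by the negative side's \textbf{($\land$-neg-l)} or \textbf{($\land$-neg-r)}, cut it against the negative side, and recombine by the same negative rule. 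The connective $\mrlimp{\fneg}{\UF}{A_1}{A_2}$ is handled similarly, now with one cut on $A_1$ (using the $\fneg$-preimages of the role sets) and one on $A_2$, and with the context split of \textbf{($\limp$-pos)} threaded through, the two cuts being performed in sequence against the single premise when only one side is positive. For $\mrlq{\UF}{\lambda x.A_1}$ one uses that a derivation may be instantiated at a term (after renaming eigenvariables) so that in the mixed positive/negative case both sides can be made to name $\subst{x}{t}{A_1}$, which we treat as a proper subformula of $\mrlq{\UF}{\lambda x.A_1}$ and hence of strictly smaller size.

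The one genuinely delicate case, and the step I expect to be the main obstacle, is when the cut i-formula is principal for an instance of \textbf{(Weaken)}, which in this presentation is a contraction rule. Permuting the cut past it removes one of the two copies but leaves a residual copy of, say, $\interp{R_2}{A}$ sitting next to the freshly created $\interp{R_1\cap R_2}{A}$; cutting $\D_1$ against the result a second time turns that residual into a second $\interp{R_1\cap R_2}{A}$, which merges with the first by a single \textbf{(Weaken)} (same role set), after which the duplicated occurrence of $\Gamma_1$ is removed by iterated \textbf{(Weaken)}. The subtlety is that this second cut is against a derivation of uncontrolled height, so it does not obviously fit the termination order; making it go through requires either strengthening the measure in the manner of Gentzen's mix rule or first establishing that \textbf{(Weaken)} is admissible without increasing derivation height, so that the residual copy can be discharged before the derivation grows. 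Everything else, and in particular all of the role-set bookkeeping, goes through smoothly once the three facts isolated at the outset are in hand; it is only this contraction/cut interaction that is subtle.
\hfill
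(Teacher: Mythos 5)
Your overall organisation coincides exactly with the paper's: the paper's entire proof of this lemma is the single sentence that one inducts on the structure of $A$ and on $\height{\D_1}+\height{\D_2}$, lexicographically ordered, which is precisely your measure. The three set-theoretic facts you isolate at the outset are correct and are indeed all of the role-set bookkeeping that is needed, and your treatment of the identity, non-principal, and principal logical cases --- including the matching-up of positive and negative rules via the ultrafilter, the passage to $\fneg$-preimages for $\lneg_{\fneg}$ and $\limp_{\fneg,\UF}$, and the use of $\subst{x}{t}{A}$ as a proper subformula of $\mrlq{\UF}{\lambda x.A}$, which the paper stipulates for exactly this purpose --- is sound.

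The one place where your argument is not yet a proof is the \textbf{(Weaken)} case, and the difficulty you flag there is genuine rather than a nervous aside: after permuting the cut above the contraction you must cut $\D_1$ against the \emph{output} of the first appeal to the induction hypothesis, whose height is not bounded in terms of $\height{\D_2}$, so the pair $(\sizeof{A},\height{\D_1}+\height{\D_2})$ does not decrease and the induction as set up does not terminate at that step. Since the paper uses the identical measure and supplies no further detail, this gap is inherited from the paper rather than introduced by you; but as written, neither your argument nor the paper's closes this case. Of your two proposed repairs, the standard and workable one is the first: strengthen the statement to a multicut, proving admissibility of $(\Gamma_1,\interp{R_1}{A},\ldots,\interp{R_1}{A};\ \Gamma_2,\interp{R_2}{A},\ldots,\interp{R_2}{A})\jimp\Gamma_1,\Gamma_2,\interp{R_1\cap R_2}{A}$ with arbitrarily many occurrences of the cut i-formula on each side, so that the contraction is absorbed into the cut and the total height genuinely drops by one. (Your alternative --- height-preserving admissibility of contraction --- is delicate here because \textbf{($\limp$-pos)} splits its context multiplicatively.) With that strengthening, the rest of your case analysis carries over essentially verbatim.
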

\begin%
{proof}
Assume that we have
$\D_1::(\Gamma,\interp{R_1}{A})$
and
$\D_2::(\Gamma,\interp{R_2}{A})$.
The proof proceeds by induction
on the structure of $A$ and $\height{\D_1}+\height{\D_2}$,
lexicographically ordered.
\hfill\end{proof}

\begin%
{lemma}
(Splitting of Roles)
\label{lemma:MRL:role-split}
The following rule is admissible in MRL:
$$
(\Gamma,\interp{R_1\uplus R_2}{A})
\jimp
{\Gamma,\interp{R_1}{A},\interp{R_2}{A}}
$$
\end{lemma}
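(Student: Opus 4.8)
The plan is to obtain this rule as an instance of \textbf{2-cut with spill} rather than by a fresh induction; the two preceding lemmas supply everything needed. Suppose $\D::(\Gamma,\interp{R_1\uplus R_2}{A})$ is given, with $R_1$ and $R_2$ disjoint. First I would appeal to the (directly proved, i.e.\ by induction on $A$) lemma stating admissibility of $()\jimp{\interp{R}{A},\interp{\setcomp{R}}{A}}$, instantiated with an empty side context and $R:=R_2$, to get a derivation of $\tpjg\interp{R_2}{A},\interp{\setcomp{R_2}}{A}$. The idea is then to cut $\D$ against this derivation along $A$, using the role-set $R_1\uplus R_2$ on the first side and $\setcomp{R_2}$ on the second.

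That cut is exactly what Lemma~\ref{lemma:MRL:cut-2-spill} licenses, provided its side condition holds for the two role-sets $R_1\uplus R_2$ and $\setcomp{R_2}$: their complements are $\setcomp{R_1\uplus R_2}=\setcomp{R_1}\cap\setcomp{R_2}$ and $\setcomp{\setcomp{R_2}}=R_2$, and these are disjoint because $R_2\subseteq R_1\uplus R_2$. Applying Lemma~\ref{lemma:MRL:cut-2-spill} with $\Gamma_1:=\Gamma$ (the residual context of $\D$) and $\Gamma_2:=\interp{R_2}{A}$ (the residual context of the second premise), I get a derivation of $\Gamma,\interp{R_2}{A},\interp{(R_1\uplus R_2)\cap\setcomp{R_2}}{A}$. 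The remaining point is the set identity $(R_1\uplus R_2)\cap\setcomp{R_2}=R_1$, which holds since $R_1$ and $R_2$ are disjoint and hence $R_1\subseteq\setcomp{R_2}$; so, as a multiset, the conclusion is $\Gamma,\interp{R_1}{A},\interp{R_2}{A}$, which is what we wanted.

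I expect no genuine obstacle here: once Lemma~\ref{lemma:MRL:cut-2-spill} is available the argument is a single instantiation plus two Boolean identities over $\fullset$, and the only thing to watch is keeping straight which of $R_1\uplus R_2$, $\setcomp{R_2}$ plays the part of $R_1$ and which of $R_2$ in the cited lemma (and, relatedly, not confusing the lemma's $R_1,R_2$ with the $R_1,R_2$ of this statement).

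For the record, a self-contained proof by induction on $\D$ is also possible but distinctly more delicate. The awkward cases are those where $\interp{R_1\uplus R_2}{A}$ is principal in the last rule: in \textbf{(Weaken)} the premise already carries two copies that both must be split and then re-merged; in \textbf{($\land$-pos)} and \textbf{($\limp$-pos)} one cannot assume $R_1\in\UF$ or $R_2\in\UF$, so each of the two pieces may have to be rebuilt via the corresponding negative rule, forcing a case split on ultrafilter membership and a lexicographic induction on the structure of $A$ together with $\height{\D}$ — which is essentially the apparatus already internalized by Lemma~\ref{lemma:MRL:cut-2-spill}, so routing through that lemma is the economical choice.
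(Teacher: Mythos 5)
Your proof is correct, but it takes a genuinely different route from the paper's. The paper proves this lemma directly, by induction on the structure of $A$ and $\height{\D}$, lexicographically ordered --- precisely the self-contained argument you sketch in your final paragraph and then set aside. You instead obtain splitting as a single instance of Lemma~\ref{lemma:MRL:cut-2-spill}, cutting the given derivation against an instance of the excluded-middle-style lemma $()\jimp{\interp{R_2}{A},\interp{\setcomp{R_2}}{A}}$; your check of the side condition (the complements $\setcomp{R_1\uplus R_2}$ and $R_2$ are disjoint) and of the identity $(R_1\uplus R_2)\cap\setcomp{R_2}=R_1$ is right. The one point that genuinely requires the care you give it is circularity: the paper notes that the excluded-middle lemma can itself be derived from Lemma~\ref{lemma:MRL:fullset} together with splitting, so your argument is non-circular only because that lemma also admits, as the paper states, a direct proof by structural induction on $A$ --- which is the version you correctly insist on using. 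Your route buys economy: no fresh induction, and it exhibits splitting as a degenerate cut against excluded middle. The paper's route buys independence: splitting is established by a purely structural argument that does not lean on the considerably heavier 2-cut-with-spill lemma, which keeps the dependency graph among these meta-properties cleaner and is what makes the paper's own remark (that excluded middle follows from Lemma~\ref{lemma:MRL:fullset} plus splitting) usable without any circularity concerns.
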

\begin%
{proof}
Assume that
$\D$ is a derivation of
$(\Gamma,\interp{R_1\uplus R_2}{A})$.
The proof proceeds by induction
on the structure of $A$ and $\height{\D}$, lexicographically ordered.
\hfill\end{proof}

\begin%
{lemma}
(mp-cut)
\label{lemma:MRL:multiparty-cut}
Assume that $R_1,\ldots,R_n$ are subsets of $\RC$ for some $n\geq 1$.
If $\setcomp{R}_1\uplus\cdots\uplus\setcomp{R}_n=\fullset$ holds, then the
following rule is admissible:
$$
(\Gamma_1,\interp{R_1}{A};\ldots;\Gamma_n,\interp{R_n}{A})\jimp(\Gamma_1,\ldots,\Gamma_n)
$$
\end{lemma}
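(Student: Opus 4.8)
The plan is to prove this by induction on $n$, reducing the $n$ premisses to a single one through repeated use of Lemma~\ref{lemma:MRL:cut-2-spill} (2-cut with spill) and then closing the argument with Lemma~\ref{lemma:MRL:emptyset} (1-cut). For the base case $n=1$, the hypothesis $\setcomp{R_1}=\fullset$ forces $R_1=\emptyset$ (complementation being an involution), so the rule to be shown is exactly $(\Gamma_1,\interp{\emptyset}{A})\jimp(\Gamma_1)$, which is Lemma~\ref{lemma:MRL:emptyset}.

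For the inductive step with $n\geq 2$, I would start from derivations $\D_i::(\Gamma_i,\interp{R_i}{A})$ for $1\leq i\leq n$ with $\setcomp{R_1}\uplus\cdots\uplus\setcomp{R_n}=\fullset$. Since $\setcomp{R_{n-1}}$ and $\setcomp{R_n}$ are in particular disjoint, Lemma~\ref{lemma:MRL:cut-2-spill} applies to $\D_{n-1}$ and $\D_n$ and produces a derivation of $(\Gamma_{n-1},\Gamma_n,\interp{R_{n-1}\cap R_n}{A})$. Writing $R'=R_{n-1}\cap R_n$, De Morgan gives $\setcomp{R'}=\setcomp{R_{n-1}}\uplus\setcomp{R_n}$, so $\setcomp{R_1}\uplus\cdots\uplus\setcomp{R_{n-2}}\uplus\setcomp{R'}=\fullset$; thus the $n-1$ derivations $\D_1,\ldots,\D_{n-2}$ together with the one just obtained for $(\Gamma_{n-1},\Gamma_n,\interp{R'}{A})$ satisfy the hypotheses for $n-1$, and the induction hypothesis yields a derivation of $(\Gamma_1,\ldots,\Gamma_n)$, as required. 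Equivalently, one can run this as a single iteration: after $k$ applications of Lemma~\ref{lemma:MRL:cut-2-spill} one has a derivation of $(\Gamma_1,\ldots,\Gamma_{k+1},\interp{R_1\cap\cdots\cap R_{k+1}}{A})$, the disjointness needed at the next step being guaranteed because $\setcomp{R_1\cap\cdots\cap R_{k+1}}=\setcomp{R_1}\uplus\cdots\uplus\setcomp{R_{k+1}}$ is disjoint from $\setcomp{R_{k+2}}$; after $n-1$ steps the i-formula carried along is $\interp{R_1\cap\cdots\cap R_n}{A}$, and since $\setcomp{R_1\cap\cdots\cap R_n}=\fullset$ forces $R_1\cap\cdots\cap R_n=\emptyset$, a final appeal to Lemma~\ref{lemma:MRL:emptyset} removes $\interp{\emptyset}{A}$.

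I do not expect a genuine obstacle here, since all the real proof-theoretic work already lives in Lemma~\ref{lemma:MRL:cut-2-spill}; the only things that need care are combinatorial bookkeeping --- checking that pairwise disjointness of the complements (which is part of the meaning of $\uplus$) is preserved when the $R_i$ are intersected, which is immediate by De Morgan, and verifying that the reduced instance still meets the partition hypothesis so the induction can be carried through. In particular no fresh induction on the height of derivations or on the structure of $A$ is required for this lemma.
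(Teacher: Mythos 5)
Your proposal is correct and follows essentially the same route as the paper's proof: induction on $n$, with the base case handled by Lemma~\ref{lemma:MRL:emptyset} and the inductive step collapsing two premisses into one via Lemma~\ref{lemma:MRL:cut-2-spill}, the De Morgan identity $\setcomp{R_i\cap R_j}=\setcomp{R_i}\uplus\setcomp{R_j}$ guaranteeing that the reduced instance still satisfies the partition hypothesis. Your write-up merely makes explicit the bookkeeping that the paper leaves implicit.
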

\begin%
{proof}
The proof proceeds by induction on $n$.
If $n=1$,
then this lemma is just
Lemma~\ref{lemma:MRL:emptyset}.
Assume that $n\geq 2$ holds. Then we have
$\D_i::(\Gamma_i,\interp{R_i}{A})$ for $1\leq i\leq n$.
Clearly, $\setcomp{R_1}$ and $\setcomp{R_2}$ are disjoint.
By Lemma~\ref{lemma:MRL:cut-2-spill},
we have $\D_{12}::(\Gamma_1,\Gamma_2,\interp{{R_1\cap R_2}}{A})$.
By induction hypothesis, we can derive the sequent $\Gamma_1,\Gamma_2,\ldots,\Gamma_n$
based on $\D_{12},\ldots,\D_n$.
\hfill\end{proof}

This given proof of Lemma~\ref{lemma:MRL:multiparty-cut} clearly
indicates that multiparty cut-elimination builds on top of
Lemma~\ref{lemma:MRL:emptyset} and Lemma~\ref{lemma:MRL:cut-2-spill}.
In particular, one may see Lemma~\ref{lemma:MRL:emptyset} and
Lemma~\ref{lemma:MRL:cut-2-spill} as two fundamental meta-properties
of a logic.

\def\bang{{!}}
\def\qmark{{?}}
\def\lmrlx#1#2{(\bang{#2})_{#1}}
\begin%
{figure}
\[
\begin%
{array}{c}
\infer%
[\hbox to 0pt{\bf(Id)\hss}]
{\tpjg\interp{R_1}{\prim},\ldots,\interp{R_n}{\prim}}
{\fullset=R_1\uplus\ldots\uplus R_n}
\\[6pt]
\infer%
[\hbox to 0pt{\bf($\lneg$)\hss}]
{\tpjg\Gamma,\interp{R}{\mrln{\fneg}{A}}}
{\tpjg\Gamma,\interp{\preimg{\fneg}{R}}{A}}
\\[6pt]
\infer%
[\hbox to 0pt{\bf($\limp$-neg)\hss}]
{\tpjg\Gamma,\interp{R}{\mrlimp{\fneg}{\UF}{A}{B})}}
{R\not\in\UF & \tpjg\Gamma,\interp{\preimg{\fneg}{R}}{A},\interp{R}{B}}
\\[6pt]
\infer%
[\hbox to 0pt{\bf($\limp$-pos)\hss}]
{\tpjg\Gamma_1,\Gamma_2,\interp{R}{\mrlimp{\fneg}{\UF}{A}{B}}}
{
R\in\UF &
\tpjg\Gamma_1,\interp{\preimg{\fneg}{R}}{A} & \tpjg\Gamma_2,\interp{R}{B}
}
\\[6pt]
\infer%
[\hbox to 0pt{\bf($\land$-neg-l)\hss}]
{
\tpjg\Gamma,\interp{R}{\mrlc{\UF}{A}{B}}
}
{R\not\in\UF & \tpjg\Gamma,\interp{R}{A}}
\\[6pt]
\infer%
[\hbox to 0pt{\bf($\land$-neg-r)\hss}]
{
\tpjg\Gamma,\interp{R}{\mrlc{\UF}{A}{B}}
}
{R\not\in\UF & \tpjg\Gamma,\interp{R}{B}}
\\[6pt]
\infer%
[\hbox to 0pt{\bf($\land$-pos)\hss}]
{\tpjg\Gamma,\interp{R}{\mrlc{\UF}{A}{B}}}
{
R\in\UF &
\tpjg\Gamma,\interp{R}{A} & \tpjg\Gamma,\interp{R}{B}
}
\\[6pt]
\infer%
[\hbox to 0pt{\bf($\bang$-pos)\hss}]
{\tpjg\qmark(\Gamma)
,\interp{R}{\lmrlx{\UF}{A}}}
{R\in\UF & \tpjg\qmark(\Gamma),\interp{R}{A}}
\\[6pt]
\infer%
[\hbox to 0pt{\bf($\bang$-neg-weaken)\hss}]
{\tpjg\Gamma,\interp{R}{\lmrlx{\UF}{A}}}
{R\not\in\UF & \tpjg\Gamma}
\\[6pt]
\infer%
[\hbox to 0pt{\bf($\bang$-neg-derelict)\hss}]
{\tpjg\Gamma,\interp{R}{\lmrlx{\UF}{A}}}
{R\not\in\UF & \tpjg\Gamma,\interp{R}{A}}
\\[6pt]
\infer%
[\hbox to 0pt{\bf($\bang$-neg-contract)\hss}]
{\tpjg\Gamma,\interp{R}{\lmrlx{\UF}{A}}}
{R\not\in\UF & \tpjg\Gamma,\interp{R}{\lmrlx{\UF}{A}},\interp{R}{\lmrlx{\UF}{A}}}
\\[6pt]
\infer%
[\hbox to 0pt{\bf($\forall$-neg)\hss}]
{\tpjg\Gamma,\interp{R}{\mrlq{\UF}{\lambda x.A}}}
{R\not\in\UF & \tpjg\Gamma,\interp{R}{\subst{x}{t}{A}}}
\\[6pt]
\infer%
[\hbox to 0pt{\bf($\forall$-pos)\hss}]
{\tpjg\Gamma,\interp{R}{\mrlq{\UF}{\lambda x.A}}}
{R\in\UF & x\not\in\Gamma & \tpjg\Gamma,\interp{R}{A}}
\\[6pt]
\end{array}
\]
\label{figure:LMRL:infrules}
\caption{The inference rules for LMRL}
\end{figure}
\section%
{Linear Multirole Logic}
\label{section:LMRLogic}

In this section, we generalize classical linear logic (CLL) to linear
multirole logic (LMRL). The formulas in $\LMRL$ is defined as follows:
\[
\begin%
{array}
{lrcl}
\mbox{formulas} & A & ::= & %
\prim \mid \mrln{\fneg}{A} \mid \mrlc{\UF}{A_1}{A_2} \mid \mrlimp{\fneg}{\UF}{A}{B} \mid \lmrlx{\UF}{A} \mid \mrlq{\UF}{\lambda x.A} \\
\end{array}
\]
Let us write $A\otimes_{\UF} B$ as a shorthand for
$\mrlimp{id}{\UF}{A}{B}$, where $id$ stands for the identity function
on $\fullset$.  If one likes, one may also prefer to write
$A{\&}_{\UF} B$ for $A{\land}_{\UF} B$.  The inference rules for
$\LMRL$ are listed in Figure~\ref{figure:LMRL:infrules}.

\begin%
{lemma}
The following rule is admissible:
$$
()\jimp{\interp{\fullset}{A}}
$$
\end{lemma}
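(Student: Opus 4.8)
The plan is to prove the statement by structural induction on the formula $A$, closely following the proof of Lemma~\ref{lemma:MRL:fullset} for $\MRL$. Two elementary facts make every case go through. First, since $\UF$ is a filter on $\fullset$, we always have $\fullset\in\UF$; so whenever the outermost connective of $A$ is indexed by an ultrafilter $\UF$, the corresponding \emph{positive} rule is applicable with the role set taken to be $\fullset$. Second, for any endomorphism $\fneg$ on $\fullset$ we have $\preimg{\fneg}{\fullset}=\fullset$ (a total function sends nothing outside $\fullset$), so the role set appearing in the premiss of a $\lneg$- or $\limp$-rule is again $\fullset$, which is exactly what the induction hypothesis provides.

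Concretely: for $A=\prim$, apply \textbf{(Id)} with the trivial one-block partition $\fullset=\fullset$. For $A=\mrln{\fneg}{A'}$, the induction hypothesis gives $\tpjg\interp{\fullset}{A'}$, and since $\preimg{\fneg}{\fullset}=\fullset$ the rule \textbf{($\lneg$)} yields $\tpjg\interp{\fullset}{\mrln{\fneg}{A'}}$. For $A=\mrlc{\UF}{A_1}{A_2}$, combine $\tpjg\interp{\fullset}{A_1}$ and $\tpjg\interp{\fullset}{A_2}$ via \textbf{($\land$-pos)} (legal since $\fullset\in\UF$), keeping the side-context empty. For $A=\mrlimp{\fneg}{\UF}{A_1}{A_2}$, combine $\tpjg\interp{\fullset}{A_1}$ (using $\preimg{\fneg}{\fullset}=\fullset$) with $\tpjg\interp{\fullset}{A_2}$ via \textbf{($\limp$-pos)}, taking $\Gamma_1=\Gamma_2=\emptyset$. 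For $A=\lmrlx{\UF}{A'}$, feed $\tpjg\interp{\fullset}{A'}$ into \textbf{($\bang$-pos)} with $\Gamma=\emptyset$, so that $\qmark(\Gamma)$ is empty. For $A=\mrlq{\UF}{\lambda x.A'}$, feed $\tpjg\interp{\fullset}{A'}$ (the induction hypothesis on the quantifier body) into \textbf{($\forall$-pos)}, where the proviso $x\not\in\Gamma$ holds vacuously because $\Gamma=\emptyset$.

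The one point that deserves attention — and the reason the $\MRL$ proof does not transfer word for word — is that $\LMRL$ has no general \textbf{(Weaken)} rule, so the argument must never introduce extraneous context. I expect this to be a non-issue rather than a genuine obstacle: in each inductive step above the relevant positive rule, once its side-context is instantiated to $\emptyset$ (respectively $\Gamma_1=\Gamma_2=\emptyset$, or $\qmark(\Gamma)$ empty), degenerates to precisely the shape ``from $\tpjg\interp{\fullset}{A_i}$ for each immediate subformula $A_i$, conclude $\tpjg\interp{\fullset}{A}$'', which is exactly what the induction hypothesis supplies. Thus the whole proof should be routine, the only things needing explicit verification being the two observations $\fullset\in\UF$ and $\preimg{\fneg}{\fullset}=\fullset$.
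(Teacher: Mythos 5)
Your proof is correct and follows exactly the route the paper takes: structural induction on $A$, using only the positive rules (together with \textbf{(Id)} and \textbf{($\lneg$)}) with all side-contexts instantiated to the empty sequent, justified by the two observations $\fullset\in\UF$ and $\preimg{\fneg}{\fullset}=\fullset$. The paper's proof is a one-line remark to this effect, and your write-up simply supplies the case analysis it leaves implicit.
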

\begin%
{proof}
By structural induction on $A$. Note that we only need
positive rules to construct a proof of $\interp{\fullset}{A}$.
\hfill\end{proof}

\begin%
{lemma}
\label{lemma:LMRL:emptyset}
The following rule is admissible:
$$
(\Gamma,\interp{\emptyset}{A})\jimp
{\Gamma}
$$
\end{lemma}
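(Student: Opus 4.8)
The plan is to follow the proof of Lemma~\ref{lemma:MRL:emptyset} essentially verbatim, adapted to the rule set of $\LMRL$: assuming $\D::(\Gamma,\interp{\emptyset}{A})$, I proceed by structural induction on $\D$. Two observations organize the case analysis. First, $\emptyset\notin\UF$ for every ultrafilter $\UF$ on $\fullset$, so $\interp{\emptyset}{A}$ can never be principal in a positive rule ($\mbox{\bf($\land$-pos)}$, $\mbox{\bf($\limp$-pos)}$, $\mbox{\bf($\bang$-pos)}$, $\mbox{\bf($\forall$-pos)}$), and when it \emph{is} principal the rule is one of $\mbox{\bf($\lneg$)}$, $\mbox{\bf($\land$-neg-l)}$, $\mbox{\bf($\land$-neg-r)}$, $\mbox{\bf($\limp$-neg)}$, $\mbox{\bf($\bang$-neg-weaken)}$, $\mbox{\bf($\bang$-neg-derelict)}$, $\mbox{\bf($\bang$-neg-contract)}$, or $\mbox{\bf($\forall$-neg)}$ (whose side condition $R\notin\UF$ is vacuously met by $R=\emptyset$). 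Second, $\preimg{\fneg}{\emptyset}=\emptyset$, so the negative rules $\mbox{\bf($\lneg$)}$ and $\mbox{\bf($\limp$-neg)}$ carry the label $\emptyset$ onto the (sub)formulas in their premisses.

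When the last rule of $\D$ does not act on $\interp{\emptyset}{A}$, the i-formula survives in (one of) the premiss(es); the induction hypothesis deletes it there and the rule is re-applied to obtain $\Gamma$. The only bookkeeping points: in $\mbox{\bf($\limp$-pos)}$ the i-formula sits in exactly one of $\Gamma_1,\Gamma_2$; in $\mbox{\bf($\bang$-pos)}$ it sits in $\qmark(\Gamma)$ and its deletion again leaves a legal $\qmark$-context; in $\mbox{\bf($\forall$-pos)}$ the eigenvariable condition persists under deletion; and in $\mbox{\bf(Id)}$ the i-formula can occur only with $A$ primitive and some $R_i=\emptyset$, after which the remaining sets still partition $\fullset$. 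When $\interp{\emptyset}{A}$ is principal: $\mbox{\bf($\bang$-neg-weaken)}$ already yields a derivation of $\Gamma$; and for $\mbox{\bf($\lneg$)}$, $\mbox{\bf($\land$-neg-l)}$, $\mbox{\bf($\land$-neg-r)}$, $\mbox{\bf($\bang$-neg-derelict)}$, $\mbox{\bf($\forall$-neg)}$ the premiss has exactly one $\emptyset$-labeled i-formula outside $\Gamma$ (an immediate subformula of $A$, a substitution instance, a dereliction, etc.), which the induction hypothesis removes.

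The step I expect to need care is $\mbox{\bf($\limp$-neg)}$ — and symmetrically $\mbox{\bf($\bang$-neg-contract)}$ — because the premiss reintroduces \emph{two} i-formulas of label $\emptyset$ (the two immediate subformulas of $A$; resp.\ two copies of the principal i-formula), so one appeal to the lemma on the subderivation is not enough, and a second appeal would be to a derivation that is no longer a subderivation of $\D$. The remedy, as in the $\MRL$ case, is to prove instead the mildly stronger statement that if $\D::\Delta$ and $\Delta=\Gamma\uplus\Theta$ with every i-formula of $\Theta$ carrying label $\emptyset$, then $\Gamma$ is derivable (the lemma being the case $\sizeof{\Theta}=1$). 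With this formulation every case is discharged by a single invocation of the induction hypothesis on a subderivation of $\D$ — in $\mbox{\bf($\limp$-neg)}$ and $\mbox{\bf($\bang$-neg-contract)}$ one just adds the two new $\emptyset$-labeled i-formulas to $\Theta$ — so the plain structural induction on $\D$ closes with no auxiliary measure.
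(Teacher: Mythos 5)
Your case analysis matches the paper's: the axiom case, the observation that $\interp{\emptyset}{A}$ can only be principal in a negative rule (since $\emptyset\not\in\UF$), and the fact that $\preimg{\fneg}{\emptyset}=\emptyset$ keeps the label $\emptyset$ on the i-formulas reintroduced in the premisses. Where you diverge is in how you license the second deletion needed for $\mbox{\bf($\limp$-neg)}$ and $\mbox{\bf($\bang$-neg-contract)}$. The paper strengthens the \emph{conclusion}: it proves by induction on $\height{\D}$ that there is a derivation $\D'$ of the reduced sequent with $\height{\D'}\leq\height{\D}$ (the proof text writes $\D'::(\Gamma,\interp{\emptyset}{A})$, evidently a typo for $\D'::\Gamma$), so that after the first application of the induction hypothesis to the immediate subderivation one still has a derivation of non-increased height and may apply the hypothesis again. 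You instead strengthen the \emph{statement}: delete an entire multiset $\Theta$ of $\emptyset$-labeled i-formulas at once, so every case is closed by a single invocation of the induction hypothesis on a proper subderivation. Both are standard and correct. Your version avoids the height bookkeeping entirely; the paper's height-preserving formulation yields a slightly stronger fact that is convenient to have on hand for the lexicographic inductions used later (e.g.\ in Lemma~\ref{lemma:LMRL:cut-2-spill}), where control over derivation height matters.
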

\begin%
{proof}
Assume $\D::(\Gamma,\interp{\emptyset}{A})$.
We prove by induction on the height of $\D$
the existence of $\D'::(\Gamma,\interp{\emptyset}{A})$
such that $\height{\D'}\leq\height{\D}$ holds.

If $\D$ consists of an application of the axiom,
then the case is trivial.
If $\interp{\emptyset}{A}$ is introduced by
the last applied rule in $\D$, then the rule must be negative
and the case follows from the induction hypothesis on the immediate
subderivation of $\D$.
If $\interp{\emptyset}{A}$ is not introduced by
the last applied rule in $\D$, then the case is straightforward.
\hfill\end{proof}


\begin%
{lemma}
\label{lemma:LMRL:cut-2-spill}
(2-cut with spill)
Assume that
$\setcomp{R_1}$
and
$\setcomp{R_2}$
are disjoint.
Then the following rule is admissible in LMRL:
$$
(\Gamma_1,\interp{R_1}{A};\Gamma_2,\interp{R_2}{A})\jimp
{\Gamma_1,\Gamma_2,\interp{R_1\cap R_2}{A}}
$$
\end{lemma}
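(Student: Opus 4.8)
The plan is to proceed exactly as for the corresponding statement in $\MRL$ (Lemma~\ref{lemma:MRL:cut-2-spill}): given $\D_1::(\Gamma_1,\interp{R_1}{A})$ and $\D_2::(\Gamma_2,\interp{R_2}{A})$, induct on the pair formed by the structure of $A$ and $\height{\D_1}+\height{\D_2}$, ordered lexicographically, with a case analysis on the last rules of $\D_1$ and $\D_2$. The hypothesis says $\setcomp{R_1}$ and $\setcomp{R_2}$ are disjoint, i.e.\ $R_1\cup R_2=\fullset$, and I will repeatedly use that (i) for every ultrafilter $\UF$ at least one of $R_1,R_2$ lies in $\UF$, whereas $R_1\cap R_2\in\UF$ holds iff both $R_1\in\UF$ and $R_2\in\UF$ --- so the residual $\interp{R_1\cap R_2}{A}$ is handled by a ``positive'' rule precisely when both cut i-formulas were; (ii) role complement commutes with the preimage operation, so $\setcomp{\preimg{\fneg}{R_1}}$ and $\setcomp{\preimg{\fneg}{R_2}}$ remain disjoint and the induction hypothesis survives passage through a connective that rewrites roles by $\preimg{\fneg}{\cdot}$; and (iii) $\preimg{\fneg}{R_1}\cap\preimg{\fneg}{R_2}=\preimg{\fneg}{R_1\cap R_2}$, so the residual emerges with the expected role set.

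The structural cases and the cases for $\lneg_{\fneg}$, $\land_{\UF}$, $\limp_{\fneg,\UF}$ and $\forall_{\UF}$ are routine. If both $\D_1$ and $\D_2$ are instances of (Id), then $A=\prim$ and $(\Gamma_1,\Gamma_2,\interp{R_1\cap R_2}{\prim})$ is again an instance of (Id): the role sets in $\Gamma_1$ partition $\setcomp{R_1}$, those in $\Gamma_2$ partition $\setcomp{R_2}$, these two blocks are disjoint by hypothesis, and together with $R_1\cap R_2$ they exhaust $\fullset$ by De Morgan. If only $\D_1$ is an (Id), then again $A=\prim$, and since no rule but (Id) introduces an atomic i-formula and (Id) carries no side context, $\interp{R_2}{\prim}$ sits in the context of the last rule of $\D_2$, so we permute the cut above it. More generally, whenever the last rule of $\D_1$ or $\D_2$ does not act on the cut i-formula we permute the cut upward and re-apply the rule, the only care being with the two-premise rules ($\land$-pos, which copies its context, and $\limp$-pos, which splits it). In the genuinely principal cases, fact (i) selects the residual rule: for $A=\mrlc{\UF}{B}{C}$ with $R_1,R_2\in\UF$ both derivations end in $\land$-pos, and we cut $B$ and $C$ separately (structure strictly smaller) and re-apply $\land$-pos; with $R_1\in\UF$ but $R_2\notin\UF$, $\D_1$ ends in $\land$-pos and $\D_2$ in $\land$-neg-l or $\land$-neg-r, and we cut the matching component and re-apply that negative rule; ``neither in $\UF$'' is impossible by (i). The mixed $\limp_{\fneg,\UF}$ configuration needs two nested subformula cuts (first on $C$, then on $B$, using (ii) and (iii)) before re-applying $\limp$-neg; and the $\forall$ cases additionally require closure of derivations under substitution of a term for a variable, together with taking the measure on $A$ to be invariant under such substitution (e.g.\ a connective count) so that each $\subst{x}{t}{B}$ counts as smaller than $\mrlq{\UF}{\lambda x.B}$.

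The exponential $A=\lmrlx{\UF}{B}$ carries all the subtlety. By (i) we may take the cut i-formula to be positive on the first side, $R_1\in\UF$. If $\D_1$ does not end in $\bang$-pos with the cut i-formula as principal, then --- since $\bang$-pos is the only rule introducing a positive $\bang$-formula, and a positive $\bang$-formula cannot inhabit the $\qmark$-context that $\bang$-pos demands --- the last rule of $\D_1$ does not act on the cut i-formula at all, and we commute, reducing $\height{\D_1}$. So assume $\D_1$ ends in $\bang$-pos with the cut i-formula principal; then $\Gamma_1$ is a $\qmark$-context. If $\D_2$ ends in $\bang$-neg-weaken, the cut closes without even using $\D_1$: take its premise and add $\Gamma_1$ and $\interp{R_1\cap R_2}{\lmrlx{\UF}{B}}$ by repeated $\bang$-neg-weaken (each i-formula of a $\qmark$-context, and the residual --- which sits at a role outside $\UF$ since $R_2\notin\UF$ --- is freely addable). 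If $\D_2$ ends in $\bang$-neg-derelict, one cut on $B$ followed by $\bang$-neg-derelict suffices; if $\D_2$ ends in $\bang$-pos (both sides positive), one cut on $B$ followed by $\bang$-pos suffices, $\Gamma_1,\Gamma_2$ still being a $\qmark$-context.

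I expect the $\bang$-neg-contract case to be the main obstacle: there $\D_2$ produces the cut i-formula by contracting two copies of $\interp{R_2}{\lmrlx{\UF}{B}}$, and the naive ``cut out one copy, then cut out the other'' fails because the height of the first intermediate derivation is not bounded by $\height{\D_2}$. The remedy is Gentzen's device for the exponential case of the Hauptsatz in multirole dress: duplicate the $\bang$-pos derivation $\D_1$, cut one copy against each copy of the cut i-formula, and then contract the two resulting copies of the $\qmark$-context $\Gamma_1$ and of the residual $\interp{R_1\cap R_2}{\lmrlx{\UF}{B}}$ back to one --- both living at roles outside the relevant ultrafilter and hence contractible by $\bang$-neg-contract. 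To respect the induction measure these cuts must be performed in a single recursive step, either by strengthening the statement so the right premise may carry a list of copies of $\interp{R_2}{A}$, or via an auxiliary lemma that commutes a cut against a $\bang$-pos derivation past an entire block of $\bang$-neg-contract inferences (provable by induction on the height of the other derivation alone, with the number of copies as a passenger), so that passing the contractions strictly lowers the measure. This is where the real work of the proof lies.
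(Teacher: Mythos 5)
The paper gives no proof of this lemma at all: Lemma~\ref{lemma:LMRL:cut-2-spill} is stated bare, and the only guidance is the one-line proof of its $\MRL$ analogue (Lemma~\ref{lemma:MRL:cut-2-spill}), which merely announces induction on the structure of $A$ and $\height{\D_1}+\height{\D_2}$, lexicographically ordered. Your proposal adopts exactly that induction and fills in the details correctly: the ultrafilter dichotomy forced by $R_1\cup R_2=\fullset$, the commutation of complement and intersection with $\preimg{\fneg}{\cdot}$ so that the side condition and the residual role survive the $\lneg$ and $\limp$ cases, the treatment of $\subst{x}{t}{A}$ as a proper subformula (which the paper stipulates explicitly in Section~\ref{section:MRLogic}), and the case analysis on principal versus contextual occurrences. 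Most importantly, you correctly identify the one place where the measure the paper announces for $\MRL$ would \emph{fail} if transplanted verbatim to $\LMRL$: the $\bang$-neg-contract case, where cutting out the two contracted copies sequentially does not keep $\height{\D_1}+\height{\D_2}$ decreasing, and one must either strengthen the statement to admit a multiset of copies of $\interp{R_2}{A}$ on the right or prove an auxiliary commutation lemma, in the style of Gentzen's \emph{Mischung} or the standard CLL treatment of promotion versus contraction. This is a genuine gap in the paper's (absent) argument that your proposal both names and repairs; nothing in your case analysis appears to be missing or wrong.
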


\begin%
{lemma}
(Splitting of Roles)
\label{lemma:LMRL:role-split}
The following rule is admissible in LMRL:
$$
(\Gamma,\interp{R_1\uplus R_2}{A})
\jimp
{\Gamma,\interp{R_1}{A},\interp{R_2}{A}}
$$
\end{lemma}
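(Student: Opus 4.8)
The plan is to follow the overall strategy of the proof of Lemma~\ref{lemma:MRL:role-split}: induction on the structure of $A$ and $\height{\D}$, ordered lexicographically, where $\D$ derives the sequent to be split. The one new twist is that the induction hypothesis must be \emph{strengthened} so that it can split several i-formulas of the form $\interp{S}{A}$ at once; that is, I would actually prove that if $\D::\tpjg\Gamma,\interp{S_1}{A},\ldots,\interp{S_k}{A}$ and $S_i=S_i^1\uplus S_i^2$ for each $i$, then $\tpjg\Gamma,\interp{S_1^1}{A},\interp{S_1^2}{A},\ldots,\interp{S_k^1}{A},\interp{S_k^2}{A}$ is derivable. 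Two elementary observations are used throughout: since $\UF$ is a (proper) ultrafilter and $R_1,R_2$ are disjoint, $R_1\uplus R_2\in\UF$ holds iff exactly one of $R_1\in\UF$, $R_2\in\UF$ holds, whereas $R_1\uplus R_2\notin\UF$ holds iff $R_1\notin\UF$ and $R_2\notin\UF$; and $\preimg{\fneg}{R_1\uplus R_2}=\preimg{\fneg}{R_1}\uplus\preimg{\fneg}{R_2}$.

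The base case (\textbf{(Id)}) forces $A=\prim$ for every split i-formula, and refining a partition $\fullset=S_1\uplus\cdots$ by replacing each $S_i$ with $S_i^1\uplus S_i^2$ still gives a partition of $\fullset$, so \textbf{(Id)} re-applies. If the last rule of $\D$ introduces none of the designated i-formulas, I would apply the induction hypothesis to its premise(s) and re-apply the rule; for the context-splitting rules \textbf{($\limp$-pos)} and \textbf{($\bang$-pos)} the designated i-formulas sit in one premise, to which the hypothesis is applied before reassembly, and in the \textbf{($\bang$-pos)} case a designated i-formula inside the $\qmark(\cdot)$-context must have shape $\interp{S}{\lmrlx{\UF'}{B}}$ with $S\notin\UF'$, so both pieces remain legitimate $\qmark(\cdot)$-entries because $S_i^j\subseteq S\notin\UF'$.

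The substantive cases are those where the last rule introduces a designated $\interp{R}{A}$ with $R=R_1\uplus R_2$; then $A$ is compound, so its immediate subformulas are strictly smaller and the hypothesis is available for them at arbitrary height. If $R\notin\UF$ (the negative rules \textbf{($\lneg$)}, \textbf{($\land$-neg-*)}, \textbf{($\limp$-neg)}, \textbf{($\forall$-neg)}, \textbf{($\bang$-neg-derelict)}), then $R_1,R_2\notin\UF$ as well, so I split the subformula-occurrences in the premise by the hypothesis and then re-apply the same negative rule, once at $R_1$ and once at $R_2$ (\textbf{($\bang$-neg-weaken)} needs no hypothesis). If $R\in\UF$ (the positive rules \textbf{($\land$-pos)}, \textbf{($\limp$-pos)}, \textbf{($\forall$-pos)}, \textbf{($\bang$-pos)}), exactly one piece, say $R_1$, lies in $\UF$ while $R_2$ does not, and the key idea—absent from the classical treatment of these cases in Lemma~\ref{lemma:MRL:role-split}—is to first introduce the connective at $R_2$ by the \emph{corresponding negative rule} (legitimate since $R_2\notin\UF$), collapsing the $R_2$-occurrences of the subformulas into a single $\interp{R_2}{A}$, and only then to apply the positive rule at $R_1$, routing that $\interp{R_2}{A}$ through the context (multiplicatively, where the rule demands it). For instance, for \textbf{($\limp$-pos)} the two premises are $\tpjg\Gamma_1,\interp{\preimg{\fneg}{R}}{A_1}$ and $\tpjg\Gamma_2,\interp{R}{A_2}$; the hypothesis gives $\tpjg\Gamma_1,\interp{\preimg{\fneg}{R_1}}{A_1},\interp{\preimg{\fneg}{R_2}}{A_1}$ and $\tpjg\Gamma_2,\interp{R_1}{A_2},\interp{R_2}{A_2}$; applying \textbf{($\limp$-pos)} at $R_1$ with the $R_2$-occurrences sent into the two premises yields $\tpjg\Gamma_1,\Gamma_2,\interp{\preimg{\fneg}{R_2}}{A_1},\interp{R_2}{A_2},\interp{R_1}{\mrlimp{\fneg}{\UF}{A_1}{A_2}}$; and \textbf{($\limp$-neg)} at $R_2$ recombines the first two into $\interp{R_2}{\mrlimp{\fneg}{\UF}{A_1}{A_2}}$. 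The rule \textbf{($\forall$-pos)} is handled the same way, instantiating with the bound variable itself so that the eigenvariable condition holds at the $R_1$-step.

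The main obstacle is \textbf{($\bang$-neg-contract)}: a designated i-formula $\interp{R}{A}$ (with $A=\lmrlx{\UF'}{B}$, $R\notin\UF'$) occurs \emph{twice} in the premise, so splitting just one copy would leave the other inside a derivation of uncontrolled height, blocking any second appeal to the hypothesis at the same formula. This is precisely what the strengthened hypothesis buys: both copies are split in one invocation on the (shorter) premise, producing two occurrences each of $\interp{R_1}{A}$ and of $\interp{R_2}{A}$, which \textbf{($\bang$-neg-contract)} merges back at $R_1$ and at $R_2$ (both allowed since $R_1,R_2\subseteq R\notin\UF'$). One must still check that the strengthened statement is genuinely applicable in cases such as \textbf{($\land$-pos)}, whose premises carry both the designated $\mrlc{\UF}{A_1}{A_2}$-i-formulas inherited from the context and the newly exposed $A_i$-i-formulas: the former are split by the hypothesis at the same formula and smaller height, the latter by the hypothesis at the strictly smaller subformula. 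The remaining bookkeeping—which pieces land in which premise of a multiplicative rule, and preservation of the $\qmark(\cdot)$-discipline—is routine.
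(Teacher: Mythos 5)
Your proof is correct and follows the same strategy the paper announces (for the MRL counterpart, Lemma~\ref{lemma:MRL:role-split}): induction on the structure of $A$ and the height of the derivation, lexicographically ordered; the paper itself gives no details for the LMRL version. The one substantive point you add beyond the paper's one-line proof sketch is the strengthening of the induction hypothesis to split several occurrences simultaneously, which is indeed necessary to get past \textbf{($\bang$-neg-contract)} (the premise duplicates the designated i-formula at the same formula, so only the height component of the measure is available), and your handling of the positive/$\bang$-pos cases via a preliminary negative-rule step at the piece outside $\UF$ is the right way to reassemble the connective on both halves.
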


\begin%
{lemma}
(mp-cut)
\label{lemma:LMRL:multiparty-cut}
Assume that $R_1,\ldots,R_n$ are subsets of $\RC$ for some $n\geq 1$.
If $\setcomp{R}_1\uplus\cdots\uplus\setcomp{R}_n=\fullset$ holds, then the
following rule is admissible:
$$
(\Gamma_1,\interp{R_1}{A};\ldots;\Gamma_n,\interp{R_n}{A})\jimp\Gamma_1,\ldots,\Gamma_n
$$
\end{lemma}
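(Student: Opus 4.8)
The plan is to follow the proof of Lemma~\ref{lemma:MRL:multiparty-cut} essentially verbatim: proceed by induction on $n$, using $1$-cut (Lemma~\ref{lemma:LMRL:emptyset}) for the base case and, in the inductive step, peeling off one binary cut with spill (Lemma~\ref{lemma:LMRL:cut-2-spill}) before appealing to the induction hypothesis on the remaining $n-1$ premisses.

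For the base case $n=1$, the side condition $\setcomp{R_1}=\fullset$ forces $R_1=\emptyset$, so the rule to be established is exactly $(\Gamma_1,\interp{\emptyset}{A})\jimp\Gamma_1$, i.e.\ Lemma~\ref{lemma:LMRL:emptyset}. For the inductive step, suppose $n\ge 2$ and that derivations $\D_i::(\Gamma_i,\interp{R_i}{A})$ are given for $1\le i\le n$. Since $\setcomp{R_1}\uplus\cdots\uplus\setcomp{R_n}=\fullset$ is a disjoint union, $\setcomp{R_1}$ and $\setcomp{R_2}$ are disjoint, so Lemma~\ref{lemma:LMRL:cut-2-spill} applies to $\D_1$ and $\D_2$ and produces $\D_{12}::(\Gamma_1,\Gamma_2,\interp{R_1\cap R_2}{A})$. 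By De~Morgan, $\setcomp{R_1\cap R_2}=\setcomp{R_1}\cup\setcomp{R_2}$, and this union is disjoint since its two operands are; hence $\setcomp{R_1\cap R_2}\uplus\setcomp{R_3}\uplus\cdots\uplus\setcomp{R_n}=\fullset$ is again a disjoint union, now over the $n-1$ sets $R_1\cap R_2,R_3,\ldots,R_n$. Applying the induction hypothesis to $\D_{12},\D_3,\ldots,\D_n$ yields a derivation of $\Gamma_1,\Gamma_2,\Gamma_3,\ldots,\Gamma_n$, which completes the step.

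At this level there is no real obstacle: all substantive work has already been delegated to Lemma~\ref{lemma:LMRL:emptyset} and Lemma~\ref{lemma:LMRL:cut-2-spill}, which --- exactly as in the remark following Lemma~\ref{lemma:MRL:multiparty-cut} --- are the two meta-properties that multiparty cut-elimination rests on. The one point that merits a moment's care, because $\LMRL$ lacks a Weakening rule, is that Lemma~\ref{lemma:LMRL:cut-2-spill} recombines the two contexts purely additively, so that the multiset $\Gamma_1,\ldots,\Gamma_n$ in the conclusion is reproduced precisely, with no contraction or weakening slipped in along the way; granted that, the argument runs unchanged from the $\MRL$ case.
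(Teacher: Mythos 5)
Your proposal is correct and follows essentially the same route as the paper: induction on $n$, with the base case $n=1$ reducing to Lemma~\ref{lemma:LMRL:emptyset} and the inductive step peeling off one application of Lemma~\ref{lemma:LMRL:cut-2-spill} before invoking the induction hypothesis, exactly as in the proof of Lemma~\ref{lemma:MRL:multiparty-cut} to which the paper defers. Your added checks (the De~Morgan computation showing the side condition is preserved, and the remark that the contexts are recombined purely additively) are correct details that the paper leaves implicit.
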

\begin%
{proof}
The proof follows induction on $n$.
It is essentially
parallel to the proof of Lemma~\ref{lemma:MRL:multiparty-cut}.
\hfill\end{proof}


\section%
{Filter-Based Interpretation for Intuitionism}
We can introduce another parameter in MRL to account for
intuitionism, supporting a genuine unification of classical
logic and intuitionistic logic.

\begin%
{definition}%
Given a filter $\F$ on $\fullset$, a sequent $\Gamma$ is
$\F$-intuitionistic if there exists at most one i-formula
$\interp{R}{A}$ in $\Gamma$ such that $R\in\F$ holds.  An inference
rule is $\F$-intuitionistic if its conclusion is a $\F$-intuitionistic
sequent.
\end{definition}

\begin%
{definition}
(Intuitionistic MRL)
Given an ideal $\F$ on $\fullset$,
the inference rules in $\MRL_{\F}$ are those in $\MRL$ that are
$\F$-intuitionistic. We may refer to $\MRL_{\F}$ as the
$\F$-intuitionistic multirole logic.
\end{definition}
It can be readily noted that $\MRL$ is essentially equivalent to the
$\MRL_{\F}$ for $\F=\{\fullset\}$.

\begin%
{lemma}
\label{lemma:MRLF:construct}
Let $\F$ be a filter $\F$ on $\fullset$.
\begin%
{itemize}
\item
If $\tpjg\interp{R}{\mrlc{\UF}{A_1}{A_2}}$ is derivable in $\MRL_{\F}$
for some $R\not\in\UF$, then $\tpjg\interp{R}{A_i}$ is derivable in
$\MRL_{\F}$ for either $i=0$ or $i=1$.
\item
If $\tpjg\interp{R}{\mrlq{\UF}{\lambda x.A}}$ is derivable in
$\MRL_{\F}$ for some $R\not\in\UF$, then there exists a term $t$ such
that $\tpjg\interp{R}{\subst{x}{t}{A}}$ is derivable in $\MRL_{\F}$.
\end{itemize}
\end{lemma}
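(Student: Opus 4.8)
The plan is to read the statement off the last inference rule of a derivation; recall that $\MRL$ (and hence $\MRL_{\F}$) has no primitive cut rule, so any derivation already admits a last-rule inversion. For the first item, let $\D$ be an $\MRL_{\F}$-derivation of $\tpjg\interp{R}{\mrlc{\UF}{A_1}{A_2}}$ with $R\not\in\UF$; the second item is handled identically, with $\forall$ in place of $\land$. The conclusion of $\D$ contains just one i-formula, and it is compound, so the last rule of $\D$ is not \textbf{(Id)} (which yields only interpretations of primitive formulas). Among the logical rules whose conclusion carries an i-formula of the shape $\interp{R}{\mrlc{\UF}{A_1}{A_2}}$, the rule \textbf{($\land$-pos)} is excluded since it requires $R\in\UF$. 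So the last rule of $\D$ is \textbf{($\land$-neg-l)}, \textbf{($\land$-neg-r)}, or the structural rule \textbf{(Weaken)}.

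The next step is to eliminate \textbf{(Weaken)}: its premise would be $\tpjg\interp{R}{\mrlc{\UF}{A_1}{A_2}},\interp{R}{\mrlc{\UF}{A_1}{A_2}}$, a sequent carrying two i-formulas on the side $R$. Using that $R\in\F$ (so that the principal i-formula lies on the distinguished intuitionistic side), this premise is not $\F$-intuitionistic and so cannot occur in an $\MRL_{\F}$-derivation; hence \textbf{(Weaken)} is not the last rule. Therefore the last rule is \textbf{($\land$-neg-l)} or \textbf{($\land$-neg-r)}, whose unique premise (the ambient context here being empty) is $\tpjg\interp{R}{A_1}$ or $\tpjg\interp{R}{A_2}$ respectively. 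The subderivation witnessing that premise uses only rules already present in $\D$ and has a single-i-formula — hence $\F$-intuitionistic — conclusion, so it is itself a legitimate $\MRL_{\F}$-derivation, which is exactly what is claimed. For the second item, the same reasoning with \textbf{($\forall$-pos)} and \textbf{($\forall$-neg)} forces the last rule to be \textbf{($\forall$-neg)}, whose premise is $\tpjg\interp{R}{\subst{x}{t}{A}}$ for precisely the term $t$ used there, supplying the required witness.

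The one substantive point — and the sole place the $\F$-intuitionistic restriction is needed — is the exclusion of \textbf{(Weaken)} on the principal i-formula. In plain $\MRL$ one may introduce $\interp{R}{\mrlc{\UF}{A_1}{A_2}}$ once from $\interp{R}{A_1}$ via \textbf{($\land$-neg-l)} and once from $\interp{R}{A_2}$ via \textbf{($\land$-neg-r)}, then merge the two copies via \textbf{(Weaken)}, so that neither disjunct alone need be derivable — e.g.\ with $A_1=\prim$ and $A_2=\mrln{\fneg}{\prim}$ for an involution $\fneg$ swapping two roles. I therefore expect the main obstacle to be the bookkeeping verifying that the ``at most one i-formula on an $\F$-side'' condition, imposed on \emph{every} sequent of a derivation, really does forbid this duplication of the principal formula; without it the classical two-sided reading survives. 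Once \textbf{(Weaken)} is ruled out, no induction is required: the result drops out of the shape of the two surviving introduction rules.
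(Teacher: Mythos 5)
The paper offers no proof of this lemma at all, so there is nothing to compare your argument against; your last-rule inversion is the natural (and essentially forced) strategy, and your case analysis over \textbf{(Id)}, the logical rules, and \textbf{(Weaken)} is otherwise carried out correctly. The genuine gap sits exactly at the step you yourself identify as the only substantive one: you exclude \textbf{(Weaken)} ``using that $R\in\F$'', but $R\in\F$ is not among the hypotheses --- the statement assumes only that $\F$ is a filter and that $R\not\in\UF$. When $R\not\in\F$, the doubled premise $\tpjg\interp{R}{\mrlc{\UF}{A_1}{A_2}},\interp{R}{\mrlc{\UF}{A_1}{A_2}}$ contains \emph{no} i-formula whose role set lies in $\F$, hence is $\F$-intuitionistic, and \textbf{(Weaken)} survives your case analysis. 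Worse, your own counterexample then lives entirely inside $\MRL_{\F}$: with $A_1=\prim$, $A_2=\mrln{\fneg}{\prim}$ and $\preimg{\fneg}{R}=\setcomp{R}$, every sequent in the derivation built from \textbf{(Id)}, \textbf{($\lneg$)}, \textbf{($\land$-neg-l)}, \textbf{($\land$-neg-r)}, \textbf{(Weaken)} carries only the role sets $R$ and $\setcomp{R}$, so each is $\F$-intuitionistic whenever $R\not\in\F$ --- for instance for $\F=\{\fullset\}$, which the paper explicitly identifies with plain $\MRL$. So the lemma as literally stated is false, and what you have actually proved is the corrected statement carrying the extra hypothesis $R\in\F$.

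You should therefore say explicitly that you are adding $R\in\F$ as a side condition (which is surely the authors' intent: $\interp{R}{\cdot}$ with $R\in\F$ is precisely the ``succedent'' position that the $\F$-intuitionistic restriction keeps a singleton, and the disjunction/existence property can only hold there). Two smaller points. First, to exclude \textbf{(Id)} you should note that its conclusion carries an arbitrary context $\Gamma$, so the right observation is that for $n\geq 1$ (and nonempty $\fullset$) its conclusion always contains at least one i-formula of a \emph{primitive} formula, which the singleton sequent $\tpjg\interp{R}{\mrlc{\UF}{A_1}{A_2}}$ does not. Second, your closing remark that the $\F$-intuitionistic condition ``really does forbid this duplication'' is exactly the point that fails without $R\in\F$, so the verification you defer is not mere bookkeeping --- it is where the missing hypothesis is consumed. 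With $R\in\F$ granted, the rest of the argument, including the $\forall$ case, is complete and needs no induction.
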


\begin%
{lemma}
\label{lemma:MRLF:multiparty-cut}
Given any filter $\F$ on $\fullset$, $\MRL_{\F}$ enjoys multiparty
cut-elimination.
\end{lemma}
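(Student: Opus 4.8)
The plan is to mirror the proof of Lemma~\ref{lemma:MRL:multiparty-cut}: I would reduce multiparty cut in $\MRL_\F$ to $\F$-intuitionistic analogues of Lemma~\ref{lemma:MRL:emptyset} (the $1$-cut) and Lemma~\ref{lemma:MRL:cut-2-spill} (the $2$-cut with spill), and then assemble the general rule by induction on $n$. (I read ``$\MRL_\F$ enjoys multiparty cut-elimination'' as: the mp-cut rule is admissible in $\MRL_\F$ for every instance whose conclusion $\Gamma_1,\ldots,\Gamma_n$ is $\F$-intuitionistic; the premisses are $\F$-intuitionistic automatically, being derivable in $\MRL_\F$.) The one obligation absent from the plain-$\MRL$ proofs is that \emph{every} sequent in \emph{every} derivation produced be $\F$-intuitionistic, so each auxiliary lemma is phrased for instances with $\F$-intuitionistic conclusion. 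A fact used throughout: for a filter $\F$ and arbitrary $S',S''$, one has $S'\cap S''\in\F$ iff both $S'\in\F$ and $S''\in\F$ ($\Rightarrow$ by upward closure, $\Leftarrow$ by closure under intersection); hence a cut i-formula $\interp{S'\cap S''}{A}$ formed by merging is $\F$-roled exactly when both merged cut i-formulas are.

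For the $1$-cut analogue, $(\Gamma,\interp{\emptyset}{A})\jimp\Gamma$, I would re-run the structural induction of Lemma~\ref{lemma:MRL:emptyset}: the construction only ever deletes the occurrence of $\interp{\emptyset}{A}$ while pushing its elimination upward, so each sequent of the resulting derivation is one occurring in the given $\MRL_\F$-derivation with at most $\interp{\emptyset}{A}$ removed, and sub-multisets of $\F$-intuitionistic sequents are $\F$-intuitionistic. For the induction on $n$, I would redo the argument of Lemma~\ref{lemma:MRL:multiparty-cut} while choosing the order of the $n-1$ binary cuts with care. At each stage I hold $\MRL_\F$-derivable sequents $\Delta_j,\interp{S_j}{A}$ whose sets $\setcomp{S_j}$ are pairwise disjoint with union $\fullset$, and I merge two of them whose contexts $\Delta_j,\Delta_{j'}$ do not both contain an $\F$-roled i-formula; this is always possible, since the assumption that $\Gamma_1,\ldots,\Gamma_n$ is $\F$-intuitionistic forces at most one of the original contexts to contain such a formula, an invariant preserved by merging. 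Using the fact above one checks each merged sequent $\Delta_j,\Delta_{j'},\interp{S_j\cap S_{j'}}{A}$ is again $\F$-intuitionistic (if its cut i-formula is $\F$-roled then $S_j,S_{j'}\in\F$, so both $\Delta_j,\Delta_{j'}$ are clean; otherwise at most one of them is not), so the $2$-cut-with-spill analogue applies. After $n-1$ merges the cut i-formula has role $\bigcap_i R_i=\emptyset$, and one application of the $1$-cut analogue yields $\tpjg\Gamma_1,\ldots,\Gamma_n$; the base case $n=1$ is the $1$-cut analogue itself.

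The crux is the $\F$-intuitionistic $2$-cut-with-spill lemma: if $\setcomp{R_1},\setcomp{R_2}$ are disjoint, $\D_1::\Gamma_1,\interp{R_1}{A}$ and $\D_2::\Gamma_2,\interp{R_2}{A}$ are derivable in $\MRL_\F$, and $\Gamma_1,\Gamma_2,\interp{R_1\cap R_2}{A}$ is $\F$-intuitionistic, then the latter is derivable in $\MRL_\F$; I would run the same lexicographic induction on $(A,\height{\D_1}+\height{\D_2})$ as Lemma~\ref{lemma:MRL:cut-2-spill}, now carrying the extra burden of $\F$-intuitionism. The difficulty — and what I expect to be the main obstacle — is that the cut-permutation and reduction steps need not respect $\F$-intuitionism: a premiss of a rule application inside an $\MRL_\F$-derivation may carry an $\F$-roled i-formula that its conclusion does not (e.g.\ the negative rule for $\limp$, passing from $\tpjg\Gamma,\interp{\preimg{\fneg}{R}}{A},\interp{R}{B}$ to $\tpjg\Gamma,\interp{R}{\mrlimp{\fneg}{\UF}{A}{B}}$ when $\preimg{\fneg}{R}\in\F$ but $R\notin\F$), and the role-splitting manoeuvre used in the $\land_\UF$ and $\forall_\UF$ principal cases of the plain-$\MRL$ proof likewise can leave the fragment. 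To cope I would strengthen the induction hypothesis so as to track the unique possible ``$\F$-stoup'' of each sequent, and in the $\land_\UF$ and $\forall_\UF$ principal cases with the relevant role outside $\UF$ I would invoke Lemma~\ref{lemma:MRLF:construct} (or a contextual strengthening of it) to commit to the single derivable subformula $A_i$ (resp.\ instance $\subst{x}{t}{A}$) rather than keeping both alternatives alive, then recurse on that strictly smaller formula. Showing that such a refinement closes every case is, I expect, where the bulk of the effort lies.
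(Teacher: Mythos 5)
The paper offers no proof of this lemma whatsoever---it is stated bare in an extended abstract---so there is nothing of the authors' to compare you against; I can only judge the proposal on its own terms. Your overall architecture is surely the intended one: replay the chain Lemma~\ref{lemma:MRL:emptyset}, Lemma~\ref{lemma:MRL:cut-2-spill}, induction on $n$ inside the $\F$-intuitionistic fragment. Your bookkeeping for the outer induction is correct and worth keeping: since $S'\cap S''\in\F$ iff both $S'\in\F$ and $S''\in\F$, and since $S_j\in\F$ forces $\Delta_j$ to be clean, every merged sequent is automatically $\F$-intuitionistic and the ``at most one dirty context'' invariant is preserved (indeed no care in choosing the pair is needed---any pair works). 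The $1$-cut analogue also goes through as you say, because that construction only ever erases an occurrence of $\interp{\emptyset}{A}$.

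The gap is that the one genuinely new obligation---the $\F$-intuitionistic version of Lemma~\ref{lemma:MRL:cut-2-spill}---is identified but not discharged. You correctly locate the failure mode: a permutation step can manufacture a sequent such as $\tpjg\Gamma_1',\interp{\preimg{\fneg}{R}}{C},\interp{R}{D},\Gamma_2,\interp{R_1\cap R_2}{A}$ carrying two $\F$-roled i-formulas even though both the premisses and the target are $\F$-intuitionistic, so the derivation being built steps outside $\MRL_{\F}$. But the proposed repair---a strengthened induction hypothesis tracking the unique ``$\F$-stoup'' together with a \emph{contextual} strengthening of Lemma~\ref{lemma:MRLF:construct}---is only named, not executed, and its key ingredient is doubtful: Lemma~\ref{lemma:MRLF:construct} is a disjunction/existence property stated for sequents consisting of a \emph{single} i-formula, and such properties typically fail in the presence of a nonempty context (from $\tpjg\Gamma,\interp{R}{\mrlc{\UF}{A_1}{A_2}}$ with $R\not\in\UF$ one cannot in general commit to one $A_i$, for the same reason that $\Gamma\vdash A\lor B$ does not yield $\Gamma\vdash A$ or $\Gamma\vdash B$ in intuitionistic logic). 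Until the case analysis for the binary cut is actually closed---which you yourself flag as ``where the bulk of the effort lies''---this is a credible plan rather than a proof.
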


Similarly, $\F$-intuitionistic $\LMRL$ ($\LMRL_{\F}$) can be defined,
and both Lemma~\ref{lemma:MRLF:construct} and and
Lemma~\ref{lemma:MRLF:multiparty-cut} have obvious corresponding
versions that hold for $\LMRL_{\F}$.

\section%
{Related Work and Conclusion}
\label{section:relatedwork_conclusion}
The first and foremost inspiration for multirole logic came from a
study on multiparty session types in distributed
programming~\cite{MTLC-i-sessiontype,MTLC-g-sessiontype}, which was in
turn closely related to series of earlier
work~\cite{Abramsky94,BellinS94,CairesP10,Wadler:2012ua}. Also, MCP, a
variant of CLL that admits a generalized cut-rule for composing
multiple proofs, is first introduced in a paper by Carbone et
al~\cite{Carbone:2015hl}. In the following work~\cite{Carbone:2016kd},
a variant of MCP is introduced, and a translation from MCP to
CP~\cite{Wadler:2012ua} via GCP (an intermediate calculus) is given
that interprets a coherence proof in MCP as an arbiter process for
mediating communications in a multiparty session.

For long, studies on logics have been greatly influencing research on
programming languages. In the case of multirole logic, we see a
genuine example that demonstrates the influence of the latter on the
former. What an influence it is! If just for one thing only, we should
immediately revisit some classical results in logic and
recast/reinterpret them in the framework of multirole logic.
\bibliographystyle{jfp}\bibliography{mybib}

\end{document}